\documentclass[12pt]{article}
\usepackage{hyperref,amsmath,amssymb,amscd,amsthm,makeidx,txfonts,graphicx,url,bm,relsize}
\usepackage{a4wide,xy}
\usepackage{float,blkarray}
\usepackage{tikz}

\setlength{\marginparwidth}{1.2in}
\let\oldmarginpar\marginpar
\renewcommand\marginpar[1]{\-\oldmarginpar[\raggedleft\footnotesize #1]
{\raggedright\footnotesize #1}}

\makeindex

\xyoption{all}
\input{xypic}

\numberwithin{equation}{section}

\newtheorem{theorem}{Theorem}[subsection]
\newtheorem{proposition}[theorem]{Proposition}

\newtheorem{lemma}[theorem]{Lemma}

\theoremstyle{remark}
\newtheorem{remark}[theorem]{Remark}

\theoremstyle{definition}

\newtheorem{nothing}[subsubsection]{}

\def\SL{{\rm SL}}
\def\PGL{{\rm PGL}}
\def\GL{{\rm GL}}
\def\calG{{\mathcal{G}}}
\def\calT{{\mathcal{T}}}

\def\B{{\rm B}}

\DeclareMathOperator{\Tr}{Tr}

\def\calF{{\mathcal{F}}}

\def\F{\mathbb{F}}

\def\Q{\overline{\mathbb{Q}}_\ell}

\def\t{{\rm t}}
\def\calC{{\mathcal C}}

\def\gl{{\rm gl}}

\begin{document}

\title{Fourier transform from the symmetric square representation of $\PGL_2$ and $\SL_2$}

\author{ G\'erard Laumon
\\ {\it Universit\'e Paris-Saclay, LMO, CNRS}
\\{\tt gerard.laumon@u-psud.fr } \and Emmanuel Letellier \\ {\it
  Universit\'e Paris Cit\'e, IMJ-PRG} \\{\tt
 emmanuel.letellier@imj-prg.fr}
 }

\pagestyle{myheadings}

\maketitle

\begin{abstract}Let $G$ be a connected reductive group over $\overline{\F}_q$ and let $\rho^\vee:G^\vee\rightarrow\GL_n$ be an algebraic  representation of the dual group $G^\vee$. Assuming that $G$ and $\rho^\vee$ are defined over $\F_q$, Braverman and Kazhdan defined an operator on the space $\calC(G(\F_q))$ of complex valued functions on $G(\F_q)$. In this paper we are interested in the case where $G$ is either $\SL_2$ or $\PGL_2$ and $\rho^\vee$ is the symmetric square representation of $G^\vee$. We construct a natural $G\times G$-equivariant embedding $G\hookrightarrow\calG=\calG_\rho$ and an involutive operator (Fourier transform) $\calF^{\calG}$ on the space of functions $\calC(\calG(\F_q))$ that extends Braverman-Kazhdan's operator.

  \end{abstract}
\tableofcontents

\section{Introduction}\label{intro}

\subsection{Motivations}

\noindent  Let $\mathbb{A}$ be the ring of adeles of a global field $F$ and let $\psi:\mathbb{A}/F\rightarrow\mathbb{C}^\times$ be a non-trivial additive character. We can define the Fourier transform of a locally constant function $f:\gl_n(\mathbb{A})\rightarrow \mathbb{C}$ with compact support as

$$
\calF^{\gl_n}(f)(x)=\int_{\gl_n(\mathbb{A})}\psi(\Tr(xy))f(y)dy.
$$
Tate, for $n=1$ \cite{Tate}, and Godement-Jacquet for arbitrary $n$ \cite{GJ} proved that this Fourier transform provides functional equations between $L$-functions.  The \emph{involutivity property} of $\calF^{\gl_n}$, namely

$$
\calF^{\gl_n}\circ\calF^{\gl_n}(f)=f^-
$$
where $f^-(x):=f(-x)$,  is an important ingredient.
\bigskip

Given now a connected reductive group $G$ together with a representation $\rho^\vee:G^\vee\rightarrow\GL_n$, Braverman-Kazhdan \cite{BK1}, Lafforgue \cite{La} and more recently Ng\^o \cite{Ngo}, considered a Fourier operator  on a certain space of functions on $G(\mathbb{A})$ defined as a restricted product of Fourier operators over local fields $F_v$. These local Fourier operators are defined spectrally and are not involutive. One of the challenge would be to extend them to (involutive) Fourier transforms in order to extend Godement-Jacquet's work to $L$-functions defined from $\rho^\vee$. 
\bigskip

The construction of the Fourier operator relatively to $\rho^\vee$ and the problem of extending them to involutive Fourier transforms make also sense when replacing local fields by finite fields. The aim of this paper is to construct such a Fourier transform (in the finite field case)  when $\rho^\vee$ is the symmetric power representation of $\SL_2$ or $\PGL_2$. We now explain this in details.

\subsection{Braverman-Kazhdan-Fourier operators on $G$}

Let $G$ be a connected reductive algebraic group with an $\F_q$-structure and  associated geometric Frobenius $F:G\rightarrow G$. In this paper, all geometric Frobenius with be denoted by $F$ and relative to $q$. Denote by $G^\vee$ the dual group of $G$ (in the sense of Deligne-Lusztig), a connected reductive algebraic group defined over $\F_q$. Lusztig has defined a partition of the set $\widehat{G^F}$ of all irreducible $\Q$-characters of $G^F$ (with $\ell$ a prime not dividing $q$) whose parts, called \emph{Lusztig series},  are indexed by the set of $F$-stable semisimple conjugacy classes of $G^\vee$.  We then denote by ${\rm LS}(G)$ the set of Lusztig series of $G^F$.
\bigskip

Let  us equip $\GL_n$ with its standard Frobenius and assume given a morphism $\rho^\vee:G^\vee\rightarrow\GL_n$ which commutes with Frobenius. It induces a map between sets of $F$-stable semisimple conjugacy classes and so a transfert map 

$$
\t_\rho:{\rm LS}(G)\rightarrow{\rm LS}(\GL_n)
$$
between Lusztig series (see \cite[\S 3.6]{LL} for more details). This is the analogue of Langlands functoriality for finite fields.
\bigskip

For a finite set $X$, let $\calC(X)$ denotes the $\Q$-space of all functions $X\rightarrow\Q$. 
\bigskip

{\bf Standard Fourier operator on $\calC(\GL_n^F)$}
\bigskip

\noindent Fix a non-trivial additive character $\psi:\F_q\rightarrow\Q^\times$ and denote by $\Tr$ the trace map on matrices. We have a natural Fourier transform $\calF^{\gl_n}:\calC(\gl_n^F)\rightarrow\calC(\gl_n^F)$ defined by

$$
\calF^{\gl_n}(f)(y)=\sum_{x\in\gl_n^F}\psi(\Tr(yx))f(x),
$$
for any $f\in\calC(\gl_n^F)$. The involutivity property of $\calF^{\gl_n}$ reads

$$
\calF^{\gl_n}\circ\calF^{\gl_n}(f)=q^{n^2}\, f^-
$$
where $f^-(x):=f(-x)$.
\bigskip

The above Fourier transform restricts to an operator 
$$
\calF^{\GL_n}:\calC(\GL_n^F)\rightarrow\calC(\GL_n^F)
$$
(which unlike $\calF^{\gl_n}$ is not involutive) defined by

$$
\calF^{\GL_n}(f)(h)=\sum_{g\in \GL_n^F}\psi(\Tr(hg))f(g).
$$
The above operator defines a function $\gamma^{\GL_n}:\widehat{\GL_n^F}\rightarrow\Q$ which is constant on Lusztig series and which satisfies

$$
\calF^{\GL_n}(\chi)=\gamma^{\GL_n}(\chi)\,\chi^*
$$
for any irreducible character $\chi$ of $\GL_n^F$ where $\chi^*$ denotes the dual character of $\chi$ (see \cite[\S 3.1]{LL} for more details).
\bigskip

{\bf BK-Fourier operator on $\calC(G^F)$}
\bigskip

\noindent  We transfert the gamma function $\gamma^{\GL_n}$ to a gamma function $\gamma^G_\rho:\widehat{G^F}\rightarrow\Q$ by the formula

\begin{equation}
\gamma^G_\rho:=c_Gc_{\GL_n}^{-1}\,\gamma^{\GL_n}\circ\t_\rho,
\label{gamma}\end{equation}
with

$$
c_G:=(-1)^{\F_q-\text{rank}(G)}q^{\nu(G)}
$$
where $\nu(G)$ is the number of positive roots of $G$.
\bigskip

Following Braverman-Kazhdan \cite{BK}, define a central function $\phi^G_\rho:G^F\rightarrow\Q$, called BK-Fourier kernel, by the formula

$$
\phi^G_\rho:=\sum_{\chi\in\widehat{G^F}}\chi(1)\gamma^G_{\rho}(\chi)\,\chi^*.
$$
The BK-Fourier operator $\calF^G_\rho:\calC(G^F)\rightarrow\calC(G^F)$ is defined by 

$$
\calF^G_\rho(f)(h)=\sum_{g\in G^F}\phi^G_\rho(hg)f(g).
$$
Notice that 

$$
\calF^G_\rho(\chi)=\gamma^G_\rho(\chi)\, \chi^*
$$
for any irreducible character $\chi$ (see \cite[\S 3.5]{LL} for more details).
\bigskip

This is the spectral definition of $\calF^G_\rho$ due to Braverman-Kazhdan. They also gave an explicit  conjectural formula for $\phi^G_\rho$ which they prove for $G=\GL_m$ and which is proved in full generality in \cite{LL} (see also \cite{NC} \cite{C} for a more geometrical proof).

\subsection{The problem}

\noindent A crucial property of Fourier transforms is the involutivity property. While $\calF^{\GL_n}$ arises as the restriction of the Fourier transform $\calF^{\gl_n}$, we can ask ourselves wether the operator $\calF^G_\rho$ is also the restriction of an involutive operator. More precisely, 
\bigskip

{\bf Question :} is there an open $G\times G$-equivariant embedding  $G\hookrightarrow \calG_\rho$ defined over $\F_q$ (where $G\times G$ acts by right and left multiplication on $G$) and an involutive operator $\calC(\calG_\rho^F)\rightarrow\calC(\calG_\rho^F)$ restricting to $\calF^G_\rho$ on functions on $G^F$ ?
\bigskip

This question has a positive answer in some simple cases. For instance,  if $\rho^\vee:\SL_n\hookrightarrow\GL_n$ is the inclusion then there exists a dual morphism $\rho:\GL_n\rightarrow\PGL_n$ which is the natural quotient \cite[\S 3.5]{LL}.  The BK-Fourier kernel $\phi^G_\rho$ is then just $\rho_!(\psi\circ\Tr)$.  We can then take the quotient stack $[\gl_n/{\rm Ker}(\rho)]=[\gl_n/\GL_1]$ as $\calG_\rho$ and consider on it  the kernel $\overline{\rho}_!(\psi\circ \Tr)$ where $\overline{\rho}:\gl_n\rightarrow[\gl_n/\GL_1]$ is the quotient. This defines an involutive Fourier transform on the space of functions $\calC(\calG_\rho^F)$ which extends $\calF_\rho^G$.
\bigskip

We will see in this paper, that in fact $\calG_\rho$ may not be not  unique (see \S \ref{geounique}).
\bigskip

\subsection{The main results}

Here we mention only the results in odd characteristic (the characteristic $2$ case is much simpler and is treated independently in \S \ref{char2}). Let $G$ be either $\SL_2$ or $\PGL_2$ equipped with the standard Frobenius $F$ and consider the symmetric square representation $\rho^\vee:G^\vee\rightarrow\GL_3$ (see \S \ref{construction}).

We will use the following identification for $G$

\begin{align*}
&G=\{(x,\alpha,b)\in\GL_2\times\GL_1\times\GL_1\,|\,\det(x)=\alpha^2\}/\GL_1\times\GL_1&\text{ if } G=\SL_2\\
&G=\{(x,\alpha,b)\in\GL_2\times\GL_1\times\GL_1\,|\,\det(x)=\alpha^2\}/\GL_1\times\mu_2\times\GL_1&\text{ if } G=\PGL_2
\end{align*}
where $\mu_2$ acts on the second coordinate by $\alpha\mapsto -\alpha$ and $\GL_1\times\GL_1$ acts by $(s,t)\cdot (x,\alpha,b)=(sx,s\alpha,tb)$.
\bigskip

Consider the stack $\calG=\calG_\rho$ defined as follows

\begin{align*}
&\calG:=[\{(x,\alpha,b)\in\gl_2\times\gl_1\times\gl_1\,|\,\det(x)=\alpha^2\}/\GL_1\times\GL_1]&\text{ if } G=\SL_2\\
&\calG:=[\{(x,\alpha,b)\in\gl_2\times\gl_1\times\gl_1\,|\,\det(x)=\alpha^2\}/\GL_1\times\mu_2\times\GL_1]&\text{ if } G=\PGL_2\\
\end{align*}

Notice that the inclusion $i:G\hookrightarrow \calG$ is a $G\times G$-equivariant open embedding (where $G\times G$ acts by left and right multiplication).

\bigskip

Consider the kernel $\phi^\calG:\calG^F\rightarrow\Q$ defined as follows (see \S \ref{mr}):
\bigskip

\noindent If $G=\SL_2$, 

\begin{align*}
\phi^\calG([x,\alpha,b])&=\sum_{s,t \in\F_q^\times}\psi\left(s(\Tr(x)+2\alpha)+tb\right)\\
&=1-q\delta_{\Tr(x)+2\alpha=0}.
\end{align*}
If $G=\PGL_2$,

$$
\phi^\calG([x,\alpha,b])=\begin{cases}\sum_{s,t \in\F_q^\times,\varepsilon\in\mu_2}\psi\left(s(\Tr(x)+2\varepsilon \alpha)+tb\right)&\text{ if }\alpha\in\gl_1^F,\\
0&\text{ if }\alpha\in\gl_1^{F'}.\end{cases}
$$
where $F':\gl_1\rightarrow\gl_1, x\mapsto -x^q$.
\bigskip

Our first main theorem is the following one (see \S \ref{mr}).

\begin{theorem} (i) The Fourier transform $\calF^\calG:\calC(\calG^F)\rightarrow\calC(\calG^F)$ defined from the kernel $\phi^\calG$ is \emph{involutive}, more precisely

$$
\calF^\calG\circ\calF^\calG(f)=q^5 f
$$
for any $f\in\calC(\calG^F)$.

\noindent (ii) $\calF^\calG$ extends the BK-Fourier operator $\calF^G=\calF^G_\rho$, i.e.

$$
\calF^G=i^*\circ\calF^\calG\circ i_!.
$$

\end{theorem}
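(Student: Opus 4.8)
\emph{Plan and reductions.} The aim is to reduce both parts to statements about the kernel $\phi^\calG$, and to prove the involutivity by a Gauss-sum computation on the affine variety covering $\calG$. Set $Y=\{(x,\alpha,b)\in\gl_2\times\gl_1\times\gl_1\mid\det x=\alpha^2\}$; since $\det(x_1x_2)=(\alpha_1\alpha_2)^2$, $Y$ is a submonoid of $\gl_2\times\gl_1\times\gl_1$ for componentwise multiplication, the monoid structure of $Y$ restricts to the group structure of $G$, and $\calG=[Y/T]$ with $T=\GL_1\times\GL_1$ (resp.\ $\GL_1\times\mu_2\times\GL_1$). One checks that $\phi^\calG$ is the push-forward along $\pi\colon Y\to\calG$ of $\Phi(x,\alpha,b):=\psi(\Tr x+2\alpha+b)$: over an untwisted $F$-point the fibre of $\pi$ is a $T^F$-orbit and $\pi_!\Phi$ is the stated torus sum, whereas over a $\mu_2$-twisted $F$-point (these occur only for $\PGL_2$) the fibre carries no $F$-point, so $\pi_!\Phi=0$ there. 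Under $\calC(\calG^F)\cong\calC(Y^F)^{T^F}$ the operator $\calF^\calG$ becomes $\widetilde f\mapsto\bigl(y_0\mapsto\sum_{y\in Y^F}\Phi(y_0y)\,\widetilde f(y)\bigr)$, the product $y_0y$ being taken in $Y$; $T$-equivariance of left multiplication together with the invariance of $\widetilde f$ shows that this does preserve $T^F$-invariant functions. For (ii), $i_!$ is extension by zero and $i^*$ is restriction, and since $i$ is $G\times G$-equivariant one gets $i^*\circ\calF^\calG\circ i_!(f)(h)=\sum_{g\in G^F}\phi^\calG(hg)f(g)$ for $h\in G^F$; hence (ii) is equivalent to the equality of central functions $\phi^\calG|_{G^F}=\phi^G_\rho$. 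So everything is in (i), together with this last identification.

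\emph{Core computation for (i).} With the above, $\calF^\calG\circ\calF^\calG(\widetilde f)(y_0)=\sum_{y\in Y^F}K(y_0,y)\widetilde f(y)$ with $K(y_0,y)=\sum_{z\in Y^F}\Phi(y_0z)\Phi(zy)$. Writing $z=(x,\beta,\gamma)$ and letting $x_*,\alpha_*,b_*$ be the sums of the corresponding coordinates of $y_0$ and $y$, cyclicity of the trace yields $\Phi(y_0z)\Phi(zy)=\psi\bigl(\Tr(xx_*)+2\beta\alpha_*+\gamma b_*\bigr)$; summing over $z\in Y^F$, the $\gamma$-sum over $\gl_1^F$ contributes $q\,\delta_{b_*=0}$ (this is the factor $[\gl_1/\GL_1]$ of $\calG$ at work), and the $(x,\beta)$-sum over the quadric contributes
$$
S(x_*,\alpha_*):=\sum_{(x,\beta)\colon\,\det x=\beta^2}\psi\bigl(\Tr(xx_*)+2\beta\alpha_*\bigr).
$$
Introducing a Lagrange multiplier $u\in\F_q$ for $\det x=\beta^2$: for $u\neq0$ the $x$-sum is a Gauss sum for the split rank-$4$ form $\det$ on $\gl_2$ and, completing the square using the self-duality of $\det$ under the adjugate ($\det(\mathrm{adj}\,x_*)=\det x_*$), equals $q^2\psi(-u^{-1}\det x_*)$, while the $\beta$-sum equals $\psi(u^{-1}\alpha_*^2)\sum_\beta\psi(-u\beta^2)$; the two square-completion exponents combine to $\psi\bigl(u^{-1}(\alpha_*^2-\det x_*)\bigr)$. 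Summing over $u$ and using the quadratic Gauss sum $\bigl(\sum_\beta\psi(\beta^2)\bigr)^2=\left(\tfrac{-1}{q}\right)q$ one obtains, with the convention $\left(\tfrac{0}{q}\right)=0$,
$$
S(x_*,\alpha_*)=q^4\,\delta_{(x_*,\alpha_*)=0}+q^2\left(\tfrac{\alpha_*^2-\det x_*}{q}\right).
$$

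\emph{Conclusion of (i), and the main obstacle.} Take now $y_0,y\in Y^F$; both lie on the quadric, so the pure quadratic part of $\alpha_*^2-\det x_*$ cancels, leaving (using $\Tr(x^2)=(\Tr x)^2-2\det x$) the quantity $E(y_0,y):=2\alpha_{y_0}\alpha_y-\Tr x_{y_0}\Tr x_y+\Tr(x_{y_0}x_y)$, which vanishes whenever $y_0$ and $y$ have the same class in $\calG^F$. Hence $K(y_0,y)=q^5\,\delta_{y=-y_0}+q^3\,\delta_{b_{y_0}+b_y=0}\left(\tfrac{E(y_0,y)}{q}\right)$; summed against the $T^F$-invariant $\widetilde f$, the second, off-diagonal term disappears, since under $y\mapsto(s,t)\cdot y$ one has $E(y_0,y)\mapsto sE(y_0,y)$ while $\sum_{s\in\F_q^\times}\left(\tfrac{s}{q}\right)=0$. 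Only $q^5\,\delta_{y=-y_0}$ survives, so $\calF^\calG\circ\calF^\calG(\widetilde f)(y_0)=q^5\,\widetilde f(-y_0)$; and $-y_0=(-1,-1)\cdot y_0$ with $(-1,-1)\in T$, so $\widetilde f(-y_0)=\widetilde f(y_0)$ --- this is precisely why the reflection $f^-$ occurring in $\calF^{\gl_n}\circ\calF^{\gl_n}=q^{n^2}f^-$ is trivialised on $\calG$. For $\PGL_2$ one first checks that $\calF^\calG$ respects the splitting of $\calC(\calG^F)$ into functions on untwisted and on $\mu_2$-twisted $F$-points (a product of two twisted points being untwisted --- where $\phi^\calG$ may be nonzero --- and a mixed product being twisted, where it vanishes), and then runs the same computation on each block, with an extra sum over $\varepsilon\in\mu_2$ and a $\mu_2$-averaging, and with the twisted Frobenius $F'$ on the $\alpha$-coordinate of the twisted block. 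The main obstacle is exactly this Gauss-sum computation --- in particular verifying that all the quadratic Gauss-sum signs cancel, so that the constant comes out as precisely $q^5$ --- together with the careful bookkeeping of stacky automorphism factors and of the twisted $F$-points in the $\PGL_2$ case.

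\emph{Proof of (ii).} Under $[x,\alpha,b]\mapsto x/\alpha$ the explicit kernel restricts on $G^F$ to $g\mapsto1-q\,\delta_{\Tr g=-2}$ for $\SL_2$, and for $\PGL_2$ to $g\mapsto2-q\,\delta_{(\Tr x)^2=4\det x}$ when $\alpha\in\gl_1^F$ and to $0$ when $\alpha\in\gl_1^{F'}$. One identifies this with the Braverman--Kazhdan kernel $\phi^G_\rho$ either by comparison with the explicit formula proved in \cite{LL}, or spectrally: compute $\gamma^G_\rho(\chi)=c_Gc_{\GL_3}^{-1}\gamma^{\GL_3}(\t_\rho(\chi))$ for every $\chi\in\widehat{G^F}$ from the symmetric-square transfer on Lusztig series and the known values of $\gamma^{\GL_3}$, then resum $\sum_{\chi}\chi(1)\gamma^G_\rho(\chi)\chi^*$. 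A first consistency check is $\sum_{g\in G^F}\phi^\calG|_{G^F}(g)=-q=\gamma^G_\rho(1)$ in both cases. This part is routine once the transfer map has been made explicit.
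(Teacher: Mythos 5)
Your argument for part (i) is correct and is, in substance, the paper's own proof reorganized. The paper factors $\calG$ (for $\SL_2$) as $\calG_1\times[\gl_1/\GL_1]$ with $\phi^\calG=\phi^{\calG_1}\boxtimes\phi^{\calG_2}$, handles the second factor trivially, and proves involutivity for $\calG_1$ as a special case of a general statement about the projectivized null cone $X=[\{Q=0\}/\F_q^\times]$ of a nondegenerate quadratic form in odd dimension $2m+1$: the kernel $\sum_s\psi(B(sv',v))$ satisfies $\calF^X\circ\calF^X=q^{2m}$, the key identity being
$$
\sum_{Q(v)=0}\psi(B(v',v))=q^{2m}\delta_0(v')+q^m\alpha_o(c)\,\alpha_o(Q(v')),
$$
with the off-diagonal term killed because $\sum_{s,s''}\alpha_o\bigl(ss''B(v,v'')\bigr)=0$. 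Your Lagrange-multiplier evaluation of $S(x_*,\alpha_*)$ and the vanishing of the $\alpha_o(E(y_0,y))$ term under the scaling action are exactly this computation, specialized to $Q(x,\alpha)=\det x-\alpha^2$ (with $B$ given by the adjugate involution) and with the $b$-variable carried along in the same sum instead of split off; your observation that $-y_0$ lies in the $T$-orbit of $y_0$, trivializing the reflection $f^-$, also appears implicitly in the paper. Your block decomposition of $\calC(\calG^F)$ for $\PGL_2$ into untwisted and $\mu_2$-twisted $F$-points is the paper's descent statement for the $G/G^o$-torsor $[X/G^o]\to[X/G]$ (the paper instead writes $\phi^{\calG_{\PGL_2}}=\tilde f^F_!(\phi^{\calG_{\SL_2}})$ and invokes that descent lemma); either way one must check involutivity on the twisted form of the quadric as well, which your sketch acknowledges and which the general odd-dimensional statement (valid for any discriminant $c$) covers. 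So the organization differs — one global Gauss-sum computation on the cone $Y$ versus a product decomposition plus a general quadric theorem plus torsor descent — but the mathematical content is identical, and your version is correct.

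Part (ii) is where you are genuinely thinner than the paper. The identity $\phi^\calG|_{G^F}=\phi^G_\rho$ is not formal: $\phi^G_\rho$ is defined spectrally, and the paper spends its Section 2 turning that definition into the closed formula $-\sum_s\psi(s(\Tr(x)+2\alpha))$ (resp.\ its $\mu_2$-averaged variant), by computing the toric kernels $\rho^F_!(\psi\circ\Tr)$ and $\rho^{\sigma F}_!(\psi\circ\Tr)$, converting $\psi(s(a+c))$ into $\psi(s(\Tr(x)+2\alpha))$ via the identity $\sum_s\psi(s\tau)=\sum_s\psi(s\tau^2)$, and then assembling $\phi^G$ through the Deligne--Lusztig induction formula of \cite{LL}. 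You correctly reduce (ii) to this comparison and name two viable routes, but carry out neither; that step is a deferral to be filled in, not an error.
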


To prove the above theorem we need the following unsatisfactory trick

$$
\sum_{s\in\F_q^\times} \psi(s\tau)=\sum_{s\in\F_q^\times}\psi(s\tau^2)
$$
as an essential ingredient.
\bigskip

Finally we will discuss the case of  the symmetric square representation of $G=\GL_2$. The BK-Fourier kernel turns out be  more complicated. However there is still a natural stack $\calG_\rho$ and a natural Fourier operator on the space of functions on $\F_q$-points but it is not involutive. 

\section{Braverman-Kazhdan kernels}

We choose a finite field $\F_q$ and a prime $\ell$ which does not divide $q$. In sections \ref{SL2case} \ref{PGL2case} \ref{GL} the characteristic of $\F_q$ will be assumed to be odd. Throughout this paper $\alpha_o:\F_q^\times\rightarrow\Q^\times$ will denote the character that takes the value $1$ at squares and the value $-1$ at non-squares. We also fix a non-trivial additive linear character $\psi:\F_q\rightarrow\Q^\times$ and denote by $\Tr$ the trace map on matrices. The maximal torus of $\GL_n$ of diagonal matrices is denoted by $T_n$.

\subsection{Generalities for tori}\label{gentori}

Let $S$, $S^\vee$ be two tori defined over $\F_q$ with geometric Frobenius $F$. Recall that $(S,F)$ and $(S^\vee,F)$ are dual if there exists an isomorphism $X(S)\simeq Y(S^\vee)$ compatible with Galois action where $X(S)$ denotes the character group of $S$ and $Y(S^\vee)$ the co-character group of $S^\vee$.
\bigskip

Given a morphism $\rho^\vee:S^\vee\rightarrow S'{^\vee}$ defined over $\F_q$, we get a \emph{dual} morphism $\rho:S'\rightarrow S$ (see for instance \cite[\S 3.5]{LL}). More concretely, if 
$$
\rho^\vee:S^\vee\rightarrow T_n^\vee=T_n,\hspace{.5cm}s\mapsto (\alpha_1(s),\dots,\alpha_n(s))
$$
with $\alpha_1,\dots,\alpha_n\in X(S^\vee)$, then 

$$
\rho: T_n\rightarrow S, (t_1,\dots,t_n)\mapsto \alpha_1(t_1)\cdots\alpha_n(t_n)
$$
where $\alpha_1,\dots,\alpha_n$ are regarded as co-characters of $S$ under the isomorphism $X(S^\vee)\simeq Y(S)$.
\bigskip

If $T_n$ is equipped with the standard Frobenius that raises matrix coefficients to their $q$-th power, the BK-Fourier kernel $\phi_\rho^S$ is then (see \cite[Lemma 5.2.1]{LL})

$$
\phi_\rho^S=c_Sc_{T_n}^{-1}\rho^F_!(\psi\circ\Tr|_{T_n}).
$$
where 
\begin{equation}
c_S:=(-1)^{\F_q-\text{rank}(S)}.
\end{equation}

\subsection{The symmetric square representations}\label{construction}

Let $G$ be denote either $\PGL_2$, $\SL_2$  or $\GL_2$ over $\overline{\F}_q$ equipped with standard Frobenius $F:G\rightarrow G$ that raises matrix coefficients to their $q$-th power. Denote by $(G^\vee,F)$ the dual of $(G,F)$ (in the sense of Deligne-Lusztig) so that $\PGL_2^\vee=\SL_2$, $\SL_2^\vee=\PGL_2$ and $\GL_2^\vee=\GL_2$ equipped with standard Frobenius.

Denote by 

$$
\rho^\vee:G^\vee\rightarrow \GL_3
$$
 the \emph{symmetric square representation} of $G^\vee$, namely if we let ${\rm Sym}^2:\GL_2\rightarrow\GL_3$ be defined by
 
  $$
{\rm Sym}^2 \left(\begin{array}{cc}a&b\\c&d\end{array}\right)=\left(\begin{array}{ccc}a^2&ab&b^2\\
 2ac&ad+bc&2bd\\c^2&cd&d^2\end{array}\right),
 $$
 then 
 
 \begin{align*}
 \rho^\vee&={\rm Sym}^2\text{ if }G=\GL_2\\
 \rho^\vee&={\rm Sym}^2|_{\SL_2}\text{ if } G=\PGL2\\
 \rho^\vee&={\rm Sym}^2\cdot {\det}^{-1}\text{ if } G=\SL_2.
 \end{align*} 
 
\begin{remark} Notice that we have factorizations

\begin{equation}
\xymatrix{\SL_2\ar[r]^{\pi^\vee}\ar[rd]_{f^\vee}&\GL_2\ar[r]^{{\rm Sym}^2}&\GL_3\\
&\PGL_2\ar[ru]_{{\rm Sym}^2\cdot\det^{-1}}&}
\label{triangle}\end{equation}
where $f^\vee$ is the canonical quotient and $\pi^\vee$ the canonical inclusion.
\label{rem1}\end{remark}

The morphism $\rho^\vee$ commutes thus  with the Frobenius morphisms. We denote by $T$ the maximal torus of $G$  consisting of diagonal matrices, and we denote by $W=\{1,\sigma\}\simeq S_2$ the Weyl group of $G$ with respect to $T$. The morphism $\rho^\vee$ induces a morphism $W\rightarrow S_3$ mapping the non-trivial element $\sigma$ of $W$ to the transposition $(13)\in S_3$ which we also denote by $\sigma$.
  
We choose an element $g\in G$ (resp. $g\in\GL_3$) such that 

$$
g^{-1}F(g)=\sigma.
$$
We then denote by $T_\sigma$ (resp. $T_{3,\sigma}$) the $F$-stable maximal torus $gTg^{-1}$ (resp. $gT_3g^{-1}$). 

Under conjugation by $g^{-1}$ we have
$$
(T_\sigma,F)\simeq (T,\sigma F),\hspace{1cm}(T_{3,\sigma},F)\simeq (T_3,\sigma F).
$$
In particular, $T_\sigma^F\simeq T^{\sigma F}$ and $T_{3,\sigma}^F\simeq T_3^{\sigma F}$. It will be more convenient to represent elements of $T_\sigma^F$ in $T^{\sigma F}$. 

The morphism $\rho^\vee$ restricts to a $W$-equivariant morphism $T^\vee\rightarrow T_3$ inducing (see \cite[Section 3.5]{LL}) a $W$-equivariant morphism $\rho:T_3\rightarrow T$ given as follows

\begin{align*}
\rho(a,b,c)=(a^2b,bc^2)&\hspace{.5cm}\text{ if }G=\GL_2\\
\rho(a,b,c)=(a/c,c/a)&\hspace{.5cm}\text{ if } G=\SL_2\\
\rho(a,b,c)=[a^2b,bc^2]&\hspace{.5cm}\text{ if } G=\PGL_2
\end{align*}
where $[a^2b,bc^2]$ denotes the class of $(a^2b,bc^2)$ in $\PGL_2$.

We thus get morphisms $\rho^F:T_3^F\rightarrow T^F$ and $\rho^{\sigma F}:T_3^{\sigma F}\rightarrow T^{\sigma F}$, and the Braverman-Kazhdan kernels (BK-Fourier kernels) $\phi^T$ and $\phi^{T_\sigma}$ on $T^F$ and $T_\sigma^F$ respectively are given by (see \S \ref{gentori})

\begin{equation}
\phi^T=c_{T_3}c_T^{-1}\rho^F_!(\psi\circ\Tr),\hspace{1cm}\phi^{T_\sigma}=c_{T_{3,\sigma}}c_{T_\sigma}^{-1}\,\rho^{\sigma F}_!(\psi\circ\Tr).
\label{BKT}\end{equation}
(we omit $\rho$ from the notation of the BK-Fourier kernels as the context will be always clear). The BK-Fourier kernel on $G^F$ is then given by the formula \cite[Theorem 5.2.3]{LL}
\begin{equation}
\phi^G=\frac{1}{2}\left(R_T^G(\phi^T)+R_{T_\sigma}^G(\phi^{T_\sigma})\right),
\label{LL}\end{equation}
where $R_T^G$ and $R_{T_\sigma}^G$ denote Deligne-Lusztig induction.

The right hand side of (\ref{LL}) can be computed explicitly. For $G=\GL_2$ we get : 

\begin{scriptsize}
\begin{equation}
\label{general}
\begin{array}{|c|c|c|c|c|c|}
\hline
&&&&\\
&g=\left(\begin{array}{cc}a&0\\0&a\end{array}\right)&g=\left(\begin{array}{cc}a&0\\0&b\end{array}\right) &  g=\left(\begin{array}{cc}x&0\\0& x^q\end{array}\right)&g=\left(\begin{array}{cc}a&1\\0&a\end{array}\right)\\
&a\in\F_q^\times&a\neq b\in\F_q^\times&x\neq x^q\in\F_{q^2}^\times&a\in\F_q^\times\\
\hline
&&&&\\
\phi^G&\frac{1}{2}\left((q+1)\phi^T(g)+(1-q)\phi^{T_\sigma}(g)\right)&\phi^T(g)&\phi^{T_\sigma}(g)&\frac{1}{2}\left(\phi^T(g_s)+\phi^{T_\sigma}(g_s)\right)\\
&&&&\\
\hline
\end{array}
\end{equation}
\end{scriptsize}
where $g_s$ denotes the semisimple part of $g$.

\begin{remark}Notice that the map $f^\vee:\SL_2\rightarrow\PGL_2$ in (\ref{triangle}) is a \emph{normal} morphism (i.e. its image is a normal subgroup) with dual morphism $f=f^\vee:\SL_2\rightarrow\PGL_2$ (see \cite[\S 3.5]{LL}). From the transitivity of the transfert map between Lusztig series (see \cite[\S 3.6]{LL}), we get from \cite[Lemma 4.1.1]{LL} that

\begin{equation}
\phi^{\PGL_2}=\begin{cases}f^F_!(\phi^{\SL_2})\\
-q^{-1}\pi_!^F(\phi^{\GL_2}).\end{cases}
\label{f}\end{equation}
Notice that $c_{\PGL_2}c_{\GL_2}^{-1}=-1$ and $c_{\PGL_2}c_{\SL_2}^{-1}=1$ (see definition of the gamma function (\ref{gamma})).
\end{remark}

\subsection{Explicit computation : The case $G=\SL_2$}\label{SL2case}
\bigskip

\noindent Recall that in this case

\begin{align*}
&\rho^F:T_3^F\rightarrow T^F,\hspace{.2cm} (a,b,c)\mapsto (a/c,c/a),\\&\rho^{\sigma F}:T_3^{\sigma F}\rightarrow T^{\sigma F},\hspace{.2cm} (a',b,a'{^q})\mapsto (a'{^{1-q}},a'{^{q-1}}).
\end{align*}
Notice that $c_{T_3}c_T^{-1}=c_{T_{3,\sigma}}c_{T_\sigma}^{-1}=1$ and 

$$
{\rm Ker}(\rho^F)=\{(s,t,s)\,|\, s,t\in\F_q^\times\}={\rm Ker}(\rho^{\sigma F}).
$$
Therefore

\begin{align*}
\phi^T(a/c,c/a)&=\rho^F_!(\psi\circ\Tr)(a/c,c/a)\\
&=\sum_{s,t}\psi(s(a+c)+bt)\\
&=-\sum_s\psi(s(a+c))\\
\phi^{T_\sigma}(a'{^{1-q}},a'{^{q-1}})&=\rho^{\sigma F}_!(\psi\circ\Tr)(a'{^{1-q}},a'{^{q-1}})\\
&=\sum_{s,t}\psi(s(a'+a'{^q})+bt)\\
&=-\sum_s\psi(s(a'+a'{^q}))
\end{align*}
where $a'\in\F_{q^2}^\times$.
\bigskip

We now use the identification

$$
\SL_2\simeq\{(x,\alpha)\in\GL_2\times\GL_1\,|\, \det(x)=\alpha^2\}/\GL_1
$$
where $\GL_1$ acts by $s\cdot(x,\alpha)=(sx,s\alpha)$, and we write the elements of $\SL_2$ in the form $[x,\alpha]$ with $\det(x)=\alpha^2$.

Notice that

$$
\SL_2^F=\{(x,\alpha)\in\GL_2^F\times\GL_1^F\,|\, \det(x)=\alpha^2\}/\GL_1^F.
$$

The morphism $\rho^F$ and $\rho^{\sigma F}$ becomes

$$
\rho^F(a,b,c)=[(a^2,c^2),ac],\hspace{.5cm}\rho^{\sigma F}(a',b,a'{^q})=[(a'{^2},a'{^{2q}}),a'{^{q+1}}].
$$
The  crucial ingredient is the following obvious identity

$$
\sum_{s\in \F_q^\times}\psi(s\tau)=\sum_{s\in\F_q^\times}\psi(s\tau^2)=\begin{cases}q-1 &\text{ if }\tau=0,\\ -1 &\text{otherwise.}\end{cases}
$$
Indeed we get
$$
\phi^T([(a^2,c^2),ac])=-\sum_{s\in\F_q^\times}\psi(s(a+c))=-\sum_{s\in\F_q^\times}\psi(s(a+c)^2)=-\sum_{s\in\F_q^\times}\psi(s(a^2+c^2+2ac)).
$$
We thus have for $[x,\alpha]\in T^F$

$$
\phi^T([x,\alpha])=-\sum_{s\in\F_q^\times}\psi\left(s(\Tr(x)+2\alpha)\right),
$$
and similarly for $[x',\alpha]\in T_\sigma ^F$, we have

$$
\phi^{T_\sigma}([x',\alpha])=-\sum_{s\in\F_q^\times}\psi\left(s(\Tr(x')+2\alpha)\right).
$$
We thus deduce from Formula (\ref{LL}) the following one.

\begin{proposition}For $[x,\alpha]\in\SL_2^F$ we have

$$
\phi^G([x,\alpha])=-\sum_{s\in\F_q^\times}\psi\left(s(\Tr(x)+2\alpha)\right)=\begin{cases}-(q-1)&\text{ if }\Tr(x)+2\alpha=0,\\1&\text{ otherwise.}\end{cases}
$$
\label{BSL}\end{proposition}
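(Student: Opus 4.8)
The plan is to substitute the torus values of $\phi^T$ and $\phi^{T_\sigma}$ just obtained into the induction formula (\ref{LL}) and to evaluate $R_T^G(\phi^T)+R_{T_\sigma}^G(\phi^{T_\sigma})$ on each conjugacy class of $\SL_2^F$. The Deligne--Lusztig computation needed here is exactly the one producing the $\GL_2$ table (\ref{general}): applying the Deligne--Lusztig character formula to a class function, together with $R_T^G(1)=1+\mathrm{St}$, $R_{T_\sigma}^G(1)=1-\mathrm{St}$ and the vanishing of $\mathrm{St}$ away from semisimple elements, gives for $\SL_2$ the table
\begin{equation*}
\phi^G(g)=
\begin{cases}
\tfrac12\bigl((q+1)\phi^T(g)+(1-q)\phi^{T_\sigma}(g)\bigr) & g=\pm I,\\[2pt]
\phi^T(g) & g \text{ regular semisimple split},\\[2pt]
\phi^{T_\sigma}(g) & g \text{ regular semisimple non-split},\\[2pt]
\tfrac12\bigl(\phi^T(g_s)+\phi^{T_\sigma}(g_s)\bigr) & g_u\neq 1,
\end{cases}
\end{equation*}
where $g_s,g_u$ denote the semisimple and unipotent parts; in the regular cases one uses that $\phi^T,\phi^{T_\sigma}$ are $W$-invariant, so $R_T^G(\phi^T)(g)=2\phi^T(g)$ on the split torus and $R_{T_\sigma}^G(\phi^{T_\sigma})(g)=0$ there (and symmetrically). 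The two distinct $\F_q$-rational non-trivial unipotent classes of $\SL_2^F$ cause no discrepancy, since $R_T^G(1)$ and $R_{T_\sigma}^G(1)$ both take the value $1$ on every non-trivial unipotent element.

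The next step is to note that all four right-hand sides collapse to a single expression. By the computations above, on $T^F$ and on $T_\sigma^F$ one has respectively $\phi^T([x,\alpha])=-\sum_{s\in\F_q^\times}\psi(s(\Tr(x)+2\alpha))$ and $\phi^{T_\sigma}([x',\alpha])=-\sum_{s\in\F_q^\times}\psi(s(\Tr(x')+2\alpha))$, in particular $\phi^T$ and $\phi^{T_\sigma}$ agree on the central elements $\pm I$, which lie in both tori. Hence the first and fourth lines of the table become $\tfrac12\bigl((q+1)+(1-q)\bigr)\phi^T(g_s)=\phi^T(g_s)$ and $\phi^T(g_s)$. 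Thus in every case $\phi^G(g)$ equals $-\sum_{s\in\F_q^\times}\psi(s(\Tr(x)+2\alpha))$ evaluated either at $g$ or at $g_s$; but every element of $\SL_2^F$ is regular semisimple, central, or $\pm I$ times a unipotent, so $\Tr(g)=\Tr(g_s)$ always, and the value is $-\sum_{s\in\F_q^\times}\psi(s(\Tr(x)+2\alpha))$ uniformly on $\SL_2^F$.

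Finally, the closed form follows from $\sum_{s\in\F_q^\times}\psi(s\tau)=q-1$ for $\tau=0$ and $-1$ otherwise. The only genuine work is the first step, namely pinning down the Deligne--Lusztig normalizations (the coefficients $q+1$, $1-q$ on the central classes and $2$, $0$ on the regular ones), but this is a standard rank-one calculation, identical to the $\GL_2$ case already recorded in (\ref{general}), so the argument is short.
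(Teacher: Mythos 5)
Your proof is correct and follows the same route the paper intends: the paper simply says the proposition is ``deduced from Formula (\ref{LL})'', and you supply the Deligne--Lusztig bookkeeping (the $\SL_2$ analogue of Table (\ref{general})) together with the key observation that $\phi^T$ and $\phi^{T_\sigma}$ are both given by the single $W$-invariant expression $-\sum_{s}\psi(s(\Tr(x)+2\alpha))$, which depends only on $g_s$, so all four cases of the table collapse to one formula. The details you check (the coefficients $q+1$, $1-q$ on central classes, the vanishing of $R_{T_\sigma}^G(\phi^{T_\sigma})$ on split regular semisimple classes, and the agreement of the two Green functions on nontrivial unipotents) are all accurate.
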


\begin{remark}It will be useful to get rid of the sign by writing $\SL_2$ in the form

$$
\SL_2\simeq\left[\left\{(x,\alpha,b)\in\GL_2\times\GL_1\times\GL_1\,|\, \det(x)=\alpha\right\}/\GL_1\times\GL_1\right]
$$
where $\GL_1\times\GL_1$ acts as $(s,t)\cdot(x,\alpha,b)=(sx,s\alpha,tb)$. Indeed, under this identification

$$
\phi^G([x,\alpha,b])=\sum_{s,t\in\F_q^\times}\psi\left(s(\Tr(x)+2\alpha)+bt\right).
$$
\label{remSL}

\end{remark}

\subsection{Explicit computation : The case $G=\PGL_2$}\label{PGL2case}

Recall that

$$
\rho:T_3\rightarrow T,\hspace{1cm}(a,b,c)\mapsto [a^2b,bc^2]=[a/c,c/a].
$$
and so 

$$
{\rm Ker}(\rho^F)=\{(s,t,\varepsilon s)\,|\, s,t\in\F_q^\times, \varepsilon\in\mu_2\},\hspace{.5cm}
{\rm Ker}(\rho^{\sigma F})=\{(u,t,u^q)\,|\, t\in\F_q^\times, u\in\F_{q^2}^\times,  u^q=\pm u\}.
$$
We thus have

\begin{align*}\phi^T([a/c,c/a])&=\sum_{s,t\in\F_q^\times,\,\varepsilon\in\mu_2}\psi(s(a+\varepsilon c)+t)\\
&=-\sum_{s,\varepsilon}\psi(s(a+\epsilon c)^2)\\
&=-\sum_{s,\varepsilon}\psi(s(a^2+c^2+2\varepsilon ac))\\
\phi^{T_\sigma}([a'{^{1-q}},a'{^{q-1}}])&=-\sum_{s\in\F_q^\times,\,\varepsilon\in\mu_2}\psi(s(a'{^2}+a'{^{2q}}+2\varepsilon a'{^{q+1}})).
\end{align*}
\bigskip

Let us identify 

$$\PGL_2=\SL_2/\mu_2=\left\{(x,\alpha)\in\GL_2\times\GL_1\,|\, \det(x)=\alpha^2\right\}/\GL_1\times\mu_2
$$
where $\mu_2$ acts by multiplication on the second coordinate. Notice that

$$
\PGL_2^F=\left\{(x,\alpha)\in\GL_2^F\times(\GL_1^F\sqcup\GL_1^{F'})\,|\, \det(x)=\alpha^2\right\}/\GL_1^F\times\mu_2
$$
where $F':\GL_1\rightarrow\GL_1, x\mapsto -x^q$ is the standard Frobenius twisted by the non-trivial element of $H^1(F,\mu_2)=\mu_2$.

Using Formula (\ref{LL}) we deduce the following one.

\begin{proposition}For $[x,\alpha]\in\PGL_2^F$ we have

$$
\phi^G([x,\alpha])=\begin{cases}-\sum_{s\in\F_q^\times,\,\varepsilon\in\mu_2}\psi(s(\Tr(x)+2\varepsilon\alpha))&\text{ if }\alpha\in\GL_1^F,\\
0 & \text{ if }\alpha\in\GL_1^{F'}.\end{cases}
$$
\label{BPGL}\end{proposition}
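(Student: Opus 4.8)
The plan is to compute the right-hand side of Formula~(\ref{LL}) explicitly using the values of $\phi^T$ and $\phi^{T_\sigma}$ already obtained above, following the same pattern as in the $\SL_2$ case of \S\ref{SL2case} but keeping track of the extra $\mu_2$-quotient. First I would note that, just as for $\GL_2$ in~(\ref{general}), Deligne--Lusztig induction $R_T^{\PGL_2}$ and $R_{T_\sigma}^{\PGL_2}$ of a class function can be evaluated on a semisimple element by the Green-function expansion; since $\phi^T$ and $\phi^{T_\sigma}$ are class functions on the tori, the value of $\phi^G$ at a semisimple $[x,\alpha]$ is a linear combination of $\phi^T$ (resp.\ $\phi^{T_\sigma}$) evaluated at the $W$-conjugates of $[x,\alpha]$, with coefficients coming from the orders of the relevant centralizers exactly as in the $\SL_2$ computation. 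The point is that $\PGL_2$ here is $\SL_2/\mu_2$, so one can either push the $\SL_2$-answer of Proposition~\ref{BSL} through $f^F_!$ using~(\ref{f}), or repeat the torus computation directly with the enlarged kernel ${\rm Ker}(\rho^F)=\{(s,t,\varepsilon s)\}$; I would do the latter since the kernels have already been written down.

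Next I would split according to whether the eigenvalue parameter $\alpha$ lies in $\GL_1^F$ or $\GL_1^{F'}$, i.e.\ whether the semisimple element $[x,\alpha]$ comes from the split torus $T^F$ or the nonsplit torus $T_\sigma^F$ after the $\mu_2$-quotient (the subtlety being that, in $\PGL_2^F=\{(x,\alpha)\in\GL_2^F\times(\GL_1^F\sqcup\GL_1^{F'})\mid\det x=\alpha^2\}/\GL_1^F\times\mu_2$, an element of the split maximal torus of $\PGL_2$ can have $\alpha\in\GL_1^{F'}$). For $\alpha\in\GL_1^F$, the computation is formally identical to the $\SL_2$ one: writing $x$ with eigenvalues $a,c$ so that $ac=\alpha^2$ up to scaling and $a+c=\Tr(x)$, the kernel value is $\phi^T([a/c,c/a])=-\sum_{s\in\F_q^\times,\varepsilon\in\mu_2}\psi(s(a^2+c^2+2\varepsilon ac))$, and the displayed formulas above already give $\phi^T([x,\alpha])=-\sum_{s,\varepsilon}\psi(s(\Tr(x)+2\varepsilon\alpha))$ and the matching expression for $\phi^{T_\sigma}$, so~(\ref{LL}) collapses to the stated sum over $s\in\F_q^\times$, $\varepsilon\in\mu_2$ (the two Deligne--Lusztig inductions contributing with the weights that make the $W$-average come out to a single such sum). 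For $\alpha\in\GL_1^{F'}$, I would argue that the corresponding contribution vanishes: the relevant element lies in a twisted form on which the BK-kernel restricted from the torus is the induction of $\psi\circ\Tr$ along $\rho^{\sigma F}$ whose kernel ${\rm Ker}(\rho^{\sigma F})=\{(u,t,u^q)\mid u^q=\pm u\}$ now contains elements with $u^q=-u$, and summing $\psi$ over the resulting coset forces a cancellation; alternatively, one sees the vanishing directly from~(\ref{f}), since $-q^{-1}\pi_!^F(\phi^{\GL_2})$ of such an element is an average of $\phi^{\GL_2}$-values over a $\GL_1$-coset that includes the sign-twist, which cancels.

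The main obstacle I expect is the bookkeeping of the $\mu_2$-action in the two places it enters — once in the quotient defining $\PGL_2^F$ (which is why the parameter $\alpha$ ranges over $\GL_1^F\sqcup\GL_1^{F'}$ rather than just $\GL_1^F$) and once inside ${\rm Ker}(\rho^F)$ and ${\rm Ker}(\rho^{\sigma F})$ — and making sure these two $\mu_2$'s are correctly matched so that the $\alpha\in\GL_1^{F'}$ case genuinely contributes $0$ and does not instead produce a spurious nonzero term. Concretely, the delicate step is checking that an element $[x,\alpha]$ with $\alpha\in\GL_1^{F'}$ really is sent by $\rho^{\sigma F}$-induction to a value where the sum over the $\pm u$ ambiguity in ${\rm Ker}(\rho^{\sigma F})$ cancels, rather than reinforcing; this is where I would be most careful, and I would cross-check the outcome against the identity $\phi^{\PGL_2}=f^F_!(\phi^{\SL_2})$ of~(\ref{f}) applied to Proposition~\ref{BSL}, since $f^F$ collapses exactly the $\mu_2$ that distinguishes $\GL_1^F$ from $\GL_1^{F'}$ and hence predicts both the shape of the $\alpha\in\GL_1^F$ formula and the vanishing in the other case.
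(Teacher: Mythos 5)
Your treatment of the case $\alpha\in\GL_1^F$ is essentially the paper's argument: the values of $\phi^T$ and $\phi^{T_\sigma}$ computed just before the proposition already have the shape $-\sum_{s,\varepsilon}\psi(s(\Tr(x)+2\varepsilon\alpha))$, and Formula (\ref{LL}), evaluated case by case as in Table (\ref{general}), collapses to that single expression. That part is fine, as is your observation that one could instead push Proposition \ref{BSL} through $f^F_!$ using (\ref{f}).

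The gap is in your argument for the vanishing when $\alpha\in\GL_1^{F'}$. You claim the contribution vanishes because ``summing $\psi$ over the resulting coset forces a cancellation'' in the pushforward along $\rho^{\sigma F}$. That is not the mechanism, and as stated the step would fail: for an element that \emph{is} in the image of $\rho^F$ (resp.\ $\rho^{\sigma F}$), the sum of $\psi\circ\Tr$ over the fibre --- a coset of $\{(s,t,\varepsilon s)\}$ (resp.\ of $\{(u,t,u^q),\ u^q=\pm u\}$) --- equals $-\sum_{s,\varepsilon}\psi(s(a+\varepsilon c)^2)$, which is generically $2$, not $0$; no cancellation occurs. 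The actual reason is that an element $[x,\alpha]$ with $\alpha\in\GL_1^{F'}$ is not in the image of $\rho^F$ or $\rho^{\sigma F}$ at all: the image of $\rho^F$ in the split torus of $\PGL_2^F$ consists of the classes $[(a^2,c^2),ac]$, i.e.\ exactly those whose $\GL_2$-lift has square determinant, equivalently those with $\alpha\in\GL_1^F$ (an index-two subgroup), and likewise the image of $\rho^{\sigma F}$ is the index-two subgroup of the nonsplit torus. Hence $\phi^T$ and $\phi^{T_\sigma}$ vanish at such elements because the fibres are \emph{empty}, and (\ref{LL}) then gives $\phi^G=0$ there. (Note also that elements with $\alpha\in\GL_1^{F'}$ occur in \emph{both} the split and the nonsplit tori, so the vanishing is not a matter of the element lying in a ``twisted form.'') The same correction applies to your fallback: $f^F(\SL_2^F)$ is precisely the set of $[x,\alpha]$ with $\alpha\in\GL_1^F$, the complement being indexed by the nontrivial class in $H^1(F,\mu_2)$, so $f^F_!(\phi^{\SL_2})$ vanishes on that complement again because the fibre is empty --- not because an average over a $\GL_1$-coset cancels.
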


Notice that the result could have been also deduced from Formula (\ref{f}) and Proposition \ref{BSL}.

\begin{remark}From Proposition \ref{BSL} and Proposition \ref{BPGL}, we see that the kernel $\phi^G$ descends to the GIT quotient $T/\!/W$ when $G$ is $\SL_2$ or $\PGL_2$. We will see in the next section that this is not true when $G=\GL_2$.
\end{remark} 

\subsection{Explicit computation : The case $G=\GL_2$}\label{GL}

We have

$$
\rho:T_3\rightarrow T,\hspace{1cm}(a,b,c)\mapsto (a^2b,bc^2).
$$
and so

$$
{\rm Ker}(\rho^F)=\{(s,s^{-2},\varepsilon s)\,|\, s\in\F_q^\times,\varepsilon\in\mu_2\},\hspace{.5cm}{\rm Ker}(\rho^{\sigma F})=\{(s,s^{-2},s^q)\,|\, s^q=\pm s, s\neq 0\}.
$$
For $b\in\F_q$, put
$$
\kappa(b):=\sum_{s\in\F_q^\times}\psi(s^{-2}b) \hspace{1cm}\text{ and}\hspace{1cm}\kappa'(b):=\sum_{\{s\neq 0,\, s^q=-s\}}\psi(s^{-2}b).
$$
Notice that if $r\in\F_q^\times$ is a non-square, then 

$$
\kappa(rb)=\kappa'(b).
$$
Since $c_{T_3,T}=c_{T_{3,\sigma},T_\sigma}=-1$, we have (for $a,b,c\in\F_q^\times$)

\begin{align*}
\phi^T(a^2b,bc^2)&=-\sum_{s\in\F_q^\times,\,\varepsilon\in\mu_2}\psi(s(a+\varepsilon c)+s^{-2}b)\\
&=\begin{cases}-\sum_{s,\varepsilon}\psi(s+s^{-2}(a+\varepsilon c)^2b)&\text{ if } a^2\neq c^2,\\
-\left(\kappa(b)+\sum_s\psi(s+s^{-2}(a+\varepsilon c)^2b)\right)&\text{ if }a=\varepsilon c,\end{cases}
\end{align*}
using the variable change $s(a+\varepsilon c)\leftrightarrow s$ when $a+\varepsilon c\neq 0$. 
\bigskip

We get the formula for $\phi^{T_\sigma}$ as follows (where $s_o\in\F_{q^2}^\times$ satisfies $s_o^q=-s_o$)

\begin{align*}
\phi^{T_\sigma}(a'{^2}b,ba'{^{2q}})&=-\sum_{s^q=s}\psi(s(a'+a'{^q})+s^{-2}b)-\sum_{s^q=-s}\psi(s(a'-a'{^q})+s^{-2}b)\\
&=-\sum_{s^q=s}\psi(s(a'+a'{^q})+s^{-2}b)-\sum_{s^q=s}\psi(ss_o(a'-a'{^q})+s_o^{-2}s^{-2}b)\\
&=\begin{cases}-\sum_{s\in\F_q^\times,\,\varepsilon\in\mu_2}\psi\left(s+s^{-2}(a'+\epsilon a'{^q})^2b\right)&\text{ if }a'{^q}\neq \pm a',\\
-\sum_{s\in\F_q^\times}\psi\left(s+s^{-2}(a'+a'{^q})^2b\right)-\kappa'(b)&\text{ if }a'{^q}= a',\\
-\sum_{s\in\F_q^\times}\psi\left(s+s^{-2}(a'-a'{^q})^2b\right)-\kappa(b)&\text{ if }a'{^q}=-a'.
\end{cases}
\end{align*}
using the variable changes $ss_o(a'-a'{^q})\leftrightarrow s$ if $a'{^q}\neq a'$ and $s(a'+a'{^q})\leftrightarrow s$ if $a'{^q}\neq -a'$.

\begin{remark} The case $a'{^q}=-a'$ is not essential as we can write $(a'{^2}b,ba'{^{2q}})$ in the form $(a^2d,da^2)$ with $a,d\in\F_q^\times$ by putting $(a,d):=(s_oa',s_o^{-2}b)$.
\end{remark}

We now consider the identification

\begin{equation}
\GL_2\simeq \{(x,\alpha,b)\in \GL_2\times\GL_1\times\GL_1\,|\, \det(x)=\alpha^2\}/\GL_1\times\mu_2
\label{idGL}\end{equation}
where $\GL_1$ acts as $t\cdot[x,\alpha,b]=[tx,t\alpha,t^{-1}b]$ and $\mu_2$ acts by multiplication on the second coordinate (under this identification, the element $[x,\alpha,b]$ corresponds to $xb\in\GL_2$).

We have

$$
\GL_2^F=\left\{(x,\alpha,b)\in \GL_2^F\times(\GL_1^F\sqcup\GL_1^{F'})\times\GL_1^F\,|\, \det(x)=\alpha^2\right\}/\GL_1^F\times\mu_2.
$$

In the following table we write the elements of $\GL_2^F$ in the form $[x,\alpha,b]$ and we put

$$
\Phi([x,\alpha,b]):=\sum_{s\in\F_q^\times,\,\varepsilon\in\mu_2}\psi\left(s+s^{-2}b(\Tr(x)+2\varepsilon \alpha)\right).
$$

\begin{scriptsize}
\begin{equation}
\label{tableGL}
\begin{array}{|c|c|c|c|}
\hline
&&&\\
&\left[\left(\begin{array}{cc}a^2&0\\0&a^2\end{array}\right),a^2,b\right]&\left[\left(\begin{array}{cc}a^2&0\\0&c^2\end{array}\right),ac,b\right] &  \left[\left(\begin{array}{cc}a'{^2}&0\\0& a'{^{2q}}\end{array}\right), a'{^{q+1}},b\right]\\
&a,b\in\F_q^\times&a,c\in\F_q^\times, a^2\neq c^2&a'{^2}\neq a'{^{2q}}\\
\hline
&&&\\
\phi^T&-\kappa(b)-1-\Phi([x,\alpha,b])&-\Phi([x,\alpha,b])&\times \\
&&&\\
\hline
&&&\\
\phi^{T_\sigma}&-\kappa'(b)-1-\Phi([x,\alpha,b])&\times&-\Phi([x,\alpha,b])\\
&&&\\
\hline
\end{array}
\end{equation}
\end{scriptsize}

Notice that for $b\neq 0$

$$
\kappa(b)+\kappa'(b)=-2,\hspace{1cm}\frac{1}{2}(\kappa(b)-\kappa'(b))=\sum_{t\in\F_q^\times}\alpha_o(t)\psi(tb)=:S(\alpha_o,\psi_b)
$$
where recall that $\alpha_o$ is the non-trivial square root of the identity character of $\F_q^\times$ (it takes the value $1$ at squares and $-1$ at non-squares).

Using Table (\ref{general}) and the above table we get the following proposition.
\bigskip

\begin{proposition} For $[x,\alpha,b]\in\GL_2^F$ we have $\phi^G([x,\alpha,b])=0$ if $\alpha\in\GL_1^{F'}$ and otherwise we have

\begin{scriptsize}
\begin{equation}
\label{table}
\begin{array}{|c|c|c|c|c|c|}
\hline
&&&&\\
&\left[\left(\begin{array}{cc}a^2&0\\0&a^2\end{array}\right),a^2,b\right]&\left[\left(\begin{array}{cc}a^2&0\\0&c^2\end{array}\right),ac,b\right] &  \left[\left(\begin{array}{cc}a'{^2}&0\\0& a'{^{2q}}\end{array}\right), a'{^{q+1}},b\right]&\left[\left(\begin{array}{cc}a^2&1\\0&a^2\end{array}\right),a^2,b\right]\\
&a,b\in\F_q^\times&a,c\in\F_q^\times, a^2\neq c^2&a'{^2}\neq a'{^{2q}}&a,b\in\F_q^\times\\
\hline
&&&&\\
\phi^G&-qS(\alpha_o,\psi_b)-\Phi([x,\alpha,b])&-\Phi([x,\alpha,b])&-\Phi([x,\alpha,b])&-\Phi([x,\alpha,b])\\
&&&&\\
\hline
\end{array}
\end{equation}
\end{scriptsize}
This can be re-written for $g\in\GL_2^F$ as

$$
\phi^G(g)=\begin{cases}0&\text{ if } \det(g)\notin(\F_q^\times)^2,\\
-\sum_{s,\varepsilon}\psi\left(s+s^{-2}\left(\Tr(g)+2\varepsilon \sqrt{\det(g)}\right)\right)&\text{ if }\det(g)\in(\F_q^\times)^2\text{ and } g\text{ not central},\\
-\sum_{s,\varepsilon}\psi\left(s+s^{-2}\left(\Tr(g)+2\varepsilon \sqrt{\det(g)}\right)\right)-q S(\alpha_o,\psi_\lambda)&\text{ if }g=\lambda\cdot I_2.
\end{cases}
$$
Notice that in the last case, $\phi^G(g)=1-\sum_s\psi(s+4s^{-2}\lambda)
-qS(\alpha_o,\psi_\lambda)$.
\end{proposition}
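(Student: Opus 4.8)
The plan is to combine the two tables established just above: Table~(\ref{general}), which for each of the four types of conjugacy class of $\GL_2^F$ expresses $\phi^G$ in terms of $\phi^T$ and $\phi^{T_\sigma}$, and Table~(\ref{tableGL}), which records the explicit values of $\phi^T$ and $\phi^{T_\sigma}$ at the relevant diagonal (in fact central) points in terms of $\kappa$, $\kappa'$ and $\Phi$. Since $\phi^G$ is a central function, it suffices to evaluate it on one representative of each class.

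I would first dispose of the classes with $\det\notin(\F_q^\times)^2$. Such a class is necessarily semisimple, since any non-semisimple element of $\GL_2^F$ is conjugate to $\left(\begin{smallmatrix}\lambda&1\\0&\lambda\end{smallmatrix}\right)$ and hence has square determinant. If the class is split, $\phi^G$ equals $\phi^T$ by Table~(\ref{general}), and $\phi^T$ vanishes here because the point lies outside the image of $\rho^F$ --- this image being contained in $\{(u,v)\mid v/u\in(\F_q^\times)^2\}$, equivalently $uv\in(\F_q^\times)^2$; if the class is elliptic, the same argument with $\phi^{T_\sigma}$ and $\rho^{\sigma F}$ applies. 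Hence $\phi^G([x,\alpha,b])=0$ whenever $\alpha\in\GL_1^{F'}$. Next, for a split regular semisimple class $[\mathrm{diag}(a^2,c^2),ac,b]$ with $a^2\neq c^2$, Table~(\ref{general}) gives $\phi^G=\phi^T$ and Table~(\ref{tableGL}) gives $\phi^T=-\Phi$; for an elliptic regular semisimple class $[\mathrm{diag}({a'}^2,{a'}^{2q}),{a'}^{q+1},b]$, Table~(\ref{general}) gives $\phi^G=\phi^{T_\sigma}=-\Phi$; and for the non-semisimple class $[\left(\begin{smallmatrix}a^2&1\\0&a^2\end{smallmatrix}\right),a^2,b]$, Table~(\ref{general}) gives $\phi^G=\tfrac12\big(\phi^T(g_s)+\phi^{T_\sigma}(g_s)\big)$ with $g_s$ the central semisimple part, and substituting the first column of Table~(\ref{tableGL}) and using $\kappa(b)+\kappa'(b)=-2$ makes the terms $-\kappa(b)-1$ and $-\kappa'(b)-1$ cancel, leaving $\phi^G=-\Phi$. (Here one uses that $\Phi$ depends only on $\Tr(x),\alpha,b$, so its value on $g_s$ coincides with its value on $g$.)

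The only case requiring a genuine computation is the central one $[\mathrm{diag}(a^2,a^2),a^2,b]$. Table~(\ref{general}) gives $\phi^G=\tfrac12\big((q+1)\phi^T+(1-q)\phi^{T_\sigma}\big)$; inserting $\phi^T=-\kappa(b)-1-\Phi$ and $\phi^{T_\sigma}=-\kappa'(b)-1-\Phi$, the $\Phi$-contribution is $-\Phi$ and the contribution of the two $-1$'s is $-1$, while the $\kappa$-part equals $-\tfrac12\big((\kappa(b)+\kappa'(b))+q(\kappa(b)-\kappa'(b))\big)$, which by $\kappa(b)+\kappa'(b)=-2$ and $\tfrac12(\kappa(b)-\kappa'(b))=S(\alpha_o,\psi_b)$ equals $1-qS(\alpha_o,\psi_b)$. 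The constants $-1$ and $+1$ cancel and we obtain $\phi^G=-qS(\alpha_o,\psi_b)-\Phi$, completing Table~(\ref{table}).

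For the closed formula I would pass back through the identification~(\ref{idGL}), under which $[x,\alpha,b]$ corresponds to $g=xb\in\GL_2^F$; then $\Tr(g)=b\,\Tr(x)$ and $\det(g)=\det(x)b^2=(\alpha b)^2$, so $\alpha\in\GL_1^F$ exactly when $\alpha b\in\F_q^\times$, i.e. when $\det(g)\in(\F_q^\times)^2$, and in that case $\pm\alpha b$ are the square roots of $\det(g)$. Substituting into the definition of $\Phi$ rewrites it as $\sum_{s,\varepsilon}\psi\big(s+s^{-2}(\Tr(g)+2\varepsilon\sqrt{\det(g)})\big)$, whereupon the last three columns of Table~(\ref{table}) merge into the single ``$g$ not central'' clause and the first column into the central clause (noting $S(\alpha_o,\psi_\lambda)=S(\alpha_o,\psi_b)$ for $\lambda=a^2b$, since $\alpha_o(a^{-2})=1$). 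Finally, for $g=\lambda I_2$ one has $\Tr(g)=2\lambda$ and $\det(g)=\lambda^2$; separating off the $\varepsilon=-1$ term of $\Phi$, which is $\sum_s\psi(s)=-1$, gives $\Phi=\sum_s\psi(s+4s^{-2}\lambda)-1$, whence $\phi^G(g)=1-\sum_s\psi(s+4s^{-2}\lambda)-qS(\alpha_o,\psi_\lambda)$. The argument is otherwise purely mechanical; the one delicate point --- the main, if modest, obstacle --- is keeping track of the $\kappa,\kappa'$ constants and the various $\pm1$'s in the central class so that the $q$-weighted combination collapses exactly as stated.
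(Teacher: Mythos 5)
Your proof is correct and is exactly the paper's argument: the paper derives the proposition by combining Table~(\ref{general}) with Table~(\ref{tableGL}) (together with the identities $\kappa+\kappa'=-2$ and $\tfrac12(\kappa-\kappa')=S(\alpha_o,\psi_b)$), which is precisely what you carry out, including the vanishing off the image of $\rho^F$, $\rho^{\sigma F}$ for non-square determinant and the translation to the closed formula via $g=xb$.
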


We see that $\phi^G$ does not descend to $T/\!/W$ as the values of $\phi^G$ depends on the unipotent part.

\section{Extending the BK-Fourier kernel: The case $G=\SL_2,\PGL_2$}

\subsection{Preliminaries}\label{Involutive}

By a finite groupo\"id, we shall mean a groupo\"id with a finite number of isomorphism classes and whose  automorphism groups are finite. For a finite groupo\" id $X$ we denote by $\overline{X}$ the set of isomorphism classes of $X$.  By a $\Q$-valued function $X\rightarrow\Q$ on $X$ we shall mean a $\Q$-valued function on $\overline{X}$ and we denote by $\calC(X)$ the $\Q$-vector space of functions on $X$. Given a morphism $\varphi:X\rightarrow Y$ of finite groupo\"ids, we have the operator $\varphi^*:\calC(Y)\rightarrow\calC(X)$ defined by composition with $\varphi$ and the operator $\varphi_!:\calC(X)\rightarrow\calC(Y)$ defined by

$$
\varphi_!(f)(y)=\sum_{x\in X_y}\frac{1}{|{\rm Aut}(x)|}\,y_X^*(f)(x)
$$
where $\bullet$ is a point, $X_y=\bullet\times_Y X$,

$$
\xymatrix{X_y\ar[d]\ar[rr]^{y_X}&&X\ar[d]^{\varphi}\\
\bullet\ar[rr]^{y}&&Y}
$$
If $G$ is a finite group acting on a finite set $X$, we denote by $[X/G]$ the groupo\"id of $G$-equivariant maps $G\rightarrow X$ and by $\pi:X\rightarrow[X/G]$ the quotient map. Then for $f\in\calC(X)$ we have

$$
\pi_!(f)(\pi(x))=\sum_{g\in G}f(g\cdot x)
$$
for any $x\in X$.
\bigskip

Given a function $K\in\calC(X\times X)$ we can define an operator

$$
\calF:\calC(X)\rightarrow\calC(X)
$$
with kernel $K$ by

$$
\calF(f)(y)=\sum_{x\in\overline{X}}\frac{1}{|{\rm Aut}(x)|}\, K(y,x)f(x)
$$
for $f\in\calC(X)$ and $y\in\overline{X}$.
\bigskip

We have the following straightforward lemma.

\begin{lemma}The operator $\calF$ is involutive, i.e. $\calF\circ\calF=1$, if and only if for all $y,z\in\overline{X}$ we have

\begin{equation}
\Delta(z,x):=\sum_{y\in\overline{X}}\frac{1}{|{\rm Aut}(y)|}\, K(z,y)K(y,x)=\begin{cases}|{\rm Aut}(z)|&\text{ if } z=x,\\
0&\text{ otherwise.}\end{cases}
\label{inv}\end{equation}
\label{involutive}\end{lemma}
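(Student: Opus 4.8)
The statement to prove is Lemma~\ref{involutive}, which characterizes when the integral operator $\calF$ with kernel $K$ on a finite groupo\"id $X$ is involutive. This is indeed straightforward, so the plan is simply to unwind the definitions carefully, keeping track of the automorphism-group weights.

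\textbf{Setup and direction of proof.} Fix $f \in \calC(X)$ and $z \in \overline{X}$, and compute $\calF(\calF(f))(z)$ directly from the definition. By definition, $\calF(\calF(f))(z) = \sum_{y \in \overline{X}} \frac{1}{|{\rm Aut}(y)|} K(z,y)\,\calF(f)(y)$, and substituting $\calF(f)(y) = \sum_{x \in \overline{X}} \frac{1}{|{\rm Aut}(x)|} K(y,x) f(x)$ gives a double sum over $y,x \in \overline{X}$. Since $\overline{X}$ is finite, I may freely interchange the order of summation, collecting the terms involving $y$ first. This yields
$$
\calF(\calF(f))(z) = \sum_{x \in \overline{X}} \frac{1}{|{\rm Aut}(x)|}\,\Delta(z,x)\, f(x),
$$
where $\Delta(z,x) = \sum_{y \in \overline{X}} \frac{1}{|{\rm Aut}(y)|} K(z,y) K(y,x)$ is exactly the quantity defined in~(\ref{inv}).

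\textbf{The two implications.} For the ``if'' direction, suppose $\Delta(z,x) = |{\rm Aut}(z)|$ when $z = x$ and $0$ otherwise. Then the displayed double sum collapses: only the term $x = z$ survives, and it contributes $\frac{1}{|{\rm Aut}(z)|} \cdot |{\rm Aut}(z)| \cdot f(z) = f(z)$. Hence $\calF \circ \calF = 1$. For the ``only if'' direction, suppose $\calF \circ \calF = 1$. For each $x_0 \in \overline{X}$, apply the identity $\calF(\calF(f)) = f$ to the indicator function $f = \mathbf{1}_{x_0}$ (the function taking value $1$ at the isomorphism class of $x_0$ and $0$ elsewhere). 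The displayed formula then reads $\mathbf{1}_{x_0}(z) = \frac{1}{|{\rm Aut}(x_0)|}\,\Delta(z, x_0)$ for all $z \in \overline{X}$, i.e.\ $\Delta(z,x_0) = |{\rm Aut}(x_0)|\cdot\mathbf{1}_{x_0}(z)$, which is precisely~(\ref{inv}) (noting that when $z = x_0$ the factor $|{\rm Aut}(x_0)|$ equals $|{\rm Aut}(z)|$). Since $x_0$ and $z$ were arbitrary, the condition holds for all pairs.

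\textbf{Main point to watch.} There is no real obstacle here; the only thing requiring care is the bookkeeping of the weights $\frac{1}{|{\rm Aut}(\cdot)|}$, and in particular making sure that in the collapsed sum the weight $\frac{1}{|{\rm Aut}(z)|}$ attached to the term $x=z$ cancels the prescribed value $|{\rm Aut}(z)|$ of $\Delta(z,z)$. Using indicator functions to extract the pointwise condition from the operator identity is the standard trick, and it works verbatim since the indicator functions form a basis of $\calC(X)$. Thus the equivalence follows immediately once the double sum has been rearranged.
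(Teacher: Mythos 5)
Your proof is correct and is exactly the intended argument: the paper states this lemma without proof as "straightforward," and your unwinding of the double sum together with the indicator-function test for the converse is the standard computation it has in mind. The bookkeeping of the weights $\frac{1}{|{\rm Aut}(\cdot)|}$ is handled correctly in both directions.
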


Let $Z$ be a finite set endowed with a right action of a finite group $H$.

\begin{lemma}Let $K^Z\in\calC(Z\times Z)$ by $H$-invariant for the diagonal action of $H$ on $Z\times Z$ and define $K^{[Z/H]}\in\calC([Z/H]\times[Z/H])$ by

$$
K^{[Z/H]}(\overline{x},\overline{y})=\sum_{h\in H}K^Z(h\cdot x,y)=\sum_{h\in H}K^Z(x,h\cdot y)
$$
where $x,y\in Z$ maps to $\overline{x}\in\overline{[Z/H]}$ and $\overline{y}\in\overline{[Z/H]}$ respectively. If $K^Z$ satisfies (\ref{inv}) then so does $K^{[Z/H]}$.
\label{connected}\end{lemma}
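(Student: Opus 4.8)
The plan is to reduce the involutivity condition $(\ref{inv})$ for $K^{[Z/H]}$ on the groupoid $[Z/H]$ to the already-assumed condition $(\ref{inv})$ for $K^Z$ on the finite set $Z$, by carefully tracking automorphism groups and orbit sizes. First I would fix representatives: for each isomorphism class $\overline{x}\in\overline{[Z/H]}$ choose $x\in Z$ mapping to it, so that $\mathrm{Aut}(\overline{x})=\mathrm{Stab}_H(x)$ and $|\mathrm{Aut}(\overline{x})|=|H|/|H\cdot x|$, the orbit $H\cdot x$ having size $|H|/|\mathrm{Stab}_H(x)|$. The key bookkeeping identity is that for any $H$-invariant function $\Psi$ on $Z$, summing over $\overline{[Z/H]}$ with the groupoid weights $1/|\mathrm{Aut}(\cdot)|$ equals $\tfrac1{|H|}\sum_{z\in Z}\Psi(z)$, since each orbit contributes $|H\cdot x|/|H| = 1/|\mathrm{Stab}_H(x)|$ times the common value.

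Next I would write out $\Delta^{[Z/H]}(\overline z,\overline x)=\sum_{\overline y}\tfrac1{|\mathrm{Aut}(\overline y)|}K^{[Z/H]}(\overline z,\overline y)K^{[Z/H]}(\overline y,\overline x)$ and substitute the definition $K^{[Z/H]}(\overline z,\overline y)=\sum_{h\in H}K^Z(h\cdot z,y)$ and $K^{[Z/H]}(\overline y,\overline x)=\sum_{h'\in H}K^Z(h'\cdot y,x)$. The integrand (as a function of $y$) is $H$-invariant — here one uses that $K^Z$ is diagonally $H$-invariant, so replacing $y$ by $g\cdot y$ can be absorbed by reindexing $h\mapsto hg^{-1}$, $h'\mapsto gh'$ (the two inner sums over $H$) — hence the bookkeeping identity converts the $\overline y$-sum into $\tfrac1{|H|}\sum_{y\in Z}$. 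After a change of variable in the $h,h'$ sums to pull the $H$-action onto $z$ and $x$ and collect, one obtains
$$
\Delta^{[Z/H]}(\overline z,\overline x)=\sum_{h\in H}\Delta^Z(z,h\cdot x)=\sum_{h\in H}\begin{cases}1&\text{if }z=h\cdot x,\\0&\text{otherwise,}\end{cases}
$$
using condition $(\ref{inv})$ for $K^Z$ (note $K^Z$ is defined on a set, so $|\mathrm{Aut}|=1$ throughout on the $Z$-side). The final sum counts $\{h\in H\mid h\cdot x=z\}$, which is empty unless $\overline x=\overline z$, in which case (with our fixed representatives $x=z$) it has size $|\mathrm{Stab}_H(z)|=|\mathrm{Aut}(\overline z)|$. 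That is exactly condition $(\ref{inv})$ for $K^{[Z/H]}$.

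The main obstacle I anticipate is purely notational: keeping the two copies of the $H$-sum in the definition of $K^{[Z/H]}$ straight while simultaneously changing variables in the $\overline y$-summation, and making sure the diagonal-invariance of $K^Z$ is invoked in exactly the right place so that the $\overline y$-sum genuinely collapses to a set-sum over $Z$ with the factor $1/|H|$. A secondary point to handle with care is that the two expressions given for $K^{[Z/H]}$ (moving $h$ onto the first versus the second argument) agree — this follows from diagonal $H$-invariance by substituting $h\mapsto h^{-1}$ — and it is convenient to use whichever form makes the reindexing cleanest at each step. No genuinely hard inequality or structural fact is needed beyond Lemma~\ref{involutive} and the orbit–stabilizer counting.
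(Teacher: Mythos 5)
Your proof is correct and follows essentially the same route as the paper's: substitute the definition of $K^{[Z/H]}$, use orbit--stabilizer (and the diagonal $H$-invariance of $K^Z$) to convert the weighted sum over $\overline{[Z/H]}$ into $\tfrac{1}{|H|}\sum_{y\in Z}$, apply condition (\ref{inv}) for $K^Z$, and count the resulting coincidences $h\cdot x=z$ to recover $|\mathrm{Aut}(\overline{z})|\,\delta_{\overline{x},\overline{z}}$. The only cosmetic difference is that you reduce the double sum over $h,h'$ to a single $H$-sum before invoking (\ref{inv}), whereas the paper invokes it inside the double sum and then counts pairs.
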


\begin{proof}Suppose that $K^Z$ satisfies (\ref{inv}).

\begin{align*}
\sum_{\overline{y}\in\overline{[Z/H]}}\frac{1}{|C_H(y)|}K^{[Z/H]}(\overline{x},\overline{y})K^{[Z/H]}(\overline{y},\overline{z})&=\sum_{\overline{y}\in\overline{[Z/H]}}\frac{1}{|C_H(y)|}\sum_{h\in H}K^Z(h\cdot x,y)\sum_{h'\in H}K^Z(y,h'\cdot z)\\
&=\frac{1}{|H|}\sum_{y\in Z}\sum_{h\in H}K^Z(h\cdot x,y)\sum_{h'\in H}K^Z(y,h'\cdot z)\\
&=\frac{1}{|H|}\sum_{h,h'\in H}\sum_{y\in Z}K^Z(h\cdot x,y)K^Z(y,h'\cdot z)\\
&=\frac{1}{|H|}\sum_{h,h'\in H}\delta_{hx,h'x}\\
&=|C_H(x)|\,\delta_{\overline{x},\overline{z}}.
\end{align*}
\end{proof}

Let $X$ be an $\overline{\F}_q$-scheme on which an $\overline{\F}_q$-algebraic group $G$ acts on the right. We denote by $[X/G]$ the associated quotient stack. We assume that $X$, $G$ and the action of $G$ on $X$ are defined over $\F_q$ and we denote by $F$ the associated geometric Frobenius on $X$, $G$ and $[X/G]$. 
\bigskip

Recall that the isomorphism classes of $G$-torsors over ${\rm Spec}(\F_q)$ are parametrized by the set $H^1(F,G)=H^1(F,G/G^o)$ of $F$-conjugacy classes on $G$ and so the groupoid $[X/G]^F$ of $\F_q$-points of $[X/G]$ decomposes as

$$
[X/G]^F=\coprod_{\overline{h}\in H^1(F,G)}[X^{F\circ h}/G^{F\circ h}]
$$
where $h\in G$ denotes a representative of $\overline{h}$ and $G^{F\circ h}=\{g\in G\,|\, F(h^{-1}gh)=g\}$.
\bigskip

If $G$ is connected then $H^1(F,G)$ is trivial and so $[X/G]^F=[X^F/G^F]$.
\bigskip

Let 

$$
\pi:[X/G^o]\rightarrow [X/G]
$$
be the natural $G/G^o$-torsor.

Notice that the induced map

$$
\pi^F:[X/G^o]^F\rightarrow [X/G]^F
$$
is not surjective. But, for any $h\in G$, it induces a surjective map

$$
[X/G^o]^{F\circ h}=[X^{F\circ h}/G^{o(F\circ h)}]\rightarrow [X^{F\circ h}/G^{F\circ h}]
$$ 
with fibers isomorphic to $(G/G^o)^{F\circ h}=G^{F\circ h}/G^{o(F\circ h)}$.
\bigskip

Let $S:[X/G^o]\times [X/G^o]\rightarrow\overline{\F}_q$  be a $G/G^o$-invariant algebraic map for the diagonal action of $G/G^o$. Then
$$
S(F(y),F(x))=S(F\circ h(y),F\circ h(x))=S(y,x)^q
$$
for all $x,y\in [X/G^o]$ and $h\in G$, and so $S$ induces maps

$$
S^{F\circ h}:[X/G^o]^{F\circ h}\times [X/G^o]^{F\circ h}\rightarrow\F_q.
$$

For all $\overline{h}\in H^1(F,G)$, we get a kernel

$$
K^{[X/G^o]}_{\overline{h}}:[X/G^o]^{F\circ h}\times[X/G^o]^{F\circ h}\rightarrow \Q,\hspace{1cm}(x,y)\mapsto\psi(S(x,y)).
$$
and so a kernel

$$
K^{[X/G]}_{\overline{h}}:[X^{F\circ h}/G^{F\circ h}]\times[X^{F\circ h}/G^{F\circ h}]\rightarrow\Q,
$$
defined by

$$
K^{[X/G]}_{\overline{h}}(\overline{x},\overline{y})=\sum_{w\in(G/G^o)^{F\circ h}}\psi(S(w\cdot x,y)),
$$
for $x,y\in[X/G^o]^{F\circ h}$ with images denoted respectively by $\overline{x},\overline{y}$ in the quotient $[X^{F\circ h}/G^{F\circ h}]$.

We define a kernel

$$
K^{[X/G]}:[X/G]^F\times[X/G]^F\rightarrow\Q
$$
by 

$$
K^{[X/G]}(\overline{x},\overline{y})=\begin{cases}K^{[X/G]}_{\overline{h}}(\overline{x},\overline{y})&\text{ if }\overline{x},\overline{y}\text{ are both in } [X^{F\circ h}/G^{F\circ h}]\text{ for some }\overline{h}\in H^1(F,G),\\ 0&\text{ otherwise}.\end{cases}
$$

\begin{proposition} The Fourier transform $\calF^{[X/G]}:\calC([X/G]^F)\rightarrow\calC([X/G]^F)$ defined from the kernel $K^{[X/G]}$ is involutive if and only if all the kernels  $K_{\overline{h}}^{[X/G^o]}$, with $\overline{h}\in H^1(F,G)$, satisfy (\ref{inv}).
\label{descent}\end{proposition}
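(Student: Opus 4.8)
The plan is to use the decomposition of $[X/G]^F$ into sheets indexed by $H^1(F,G)$ in order to reduce the involutivity of $\calF^{[X/G]}$ to a statement about one operator per sheet, and then to recognize each of these operators as the ``averaging'' construction of Lemma~\ref{connected} applied to the connected group $G^o$. First I would record that, using the decomposition $[X/G]^F=\coprod_{\overline h\in H^1(F,G)}[X^{F\circ h}/G^{F\circ h}]$ recalled above, one has $\calC([X/G]^F)=\bigoplus_{\overline h}\calC([X^{F\circ h}/G^{F\circ h}])$, and that by its very definition $K^{[X/G]}$ vanishes on every pair $(\overline x,\overline y)$ whose two members lie in different sheets. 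Hence $\calF^{[X/G]}$ respects this direct sum, so it is involutive if and only if, for every $\overline h$, the operator $\calF_{\overline h}$ on $\calC([X^{F\circ h}/G^{F\circ h}])$ attached to the kernel $K^{[X/G]}_{\overline h}$ is involutive; by Lemma~\ref{involutive} this is equivalent to requiring that $K^{[X/G]}_{\overline h}$ satisfy (\ref{inv}) for every $\overline h$.

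Next I fix $\overline h$ and put $H_{\overline h}:=(G/G^o)^{F\circ h}$ and $Z_{\overline h}:=[X/G^o]^{F\circ h}=[X^{F\circ h}/G^{o(F\circ h)}]$, a finite groupoid. Since $G^o$ is connected, Lang's theorem gives $H^1(F\circ h,G^o)=1$, hence a short exact sequence $1\to G^{o(F\circ h)}\to G^{F\circ h}\to H_{\overline h}\to 1$ and a canonical identification $[X^{F\circ h}/G^{F\circ h}]=[Z_{\overline h}/H_{\overline h}]$. Because $S$ is $G/G^o$-invariant for the diagonal action, the kernel $K^{[X/G^o]}_{\overline h}=\psi\circ S^{F\circ h}$ on $Z_{\overline h}\times Z_{\overline h}$ is $H_{\overline h}$-invariant, and unwinding the definition of $K^{[X/G]}_{\overline h}$ shows it is precisely the kernel $K^{[Z_{\overline h}/H_{\overline h}]}$ obtained from it by the averaging formula $K^{[Z_{\overline h}/H_{\overline h}]}(\overline x,\overline y)=\sum_{w\in H_{\overline h}}K^{[X/G^o]}_{\overline h}(w\cdot x,y)$ of Lemma~\ref{connected}. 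Therefore Lemma~\ref{connected} applies directly and shows: if $K^{[X/G^o]}_{\overline h}$ satisfies (\ref{inv}), then so does $K^{[X/G]}_{\overline h}$. Combined with the first paragraph, this gives the ``if'' implication, which is the one needed to prove involutivity of the concrete operators $\calF^\calG$ later on.

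For the ``only if'' implication I would rerun the computation in the proof of Lemma~\ref{connected}, this time without assuming (\ref{inv}) for $K^{[X/G^o]}_{\overline h}$: it yields the identity $\Delta^{[Z_{\overline h}/H_{\overline h}]}(\overline x,\overline z)=\sum_{w\in H_{\overline h}}\Delta^{Z_{\overline h}}(x,w\cdot z)$ between the two ``defect'' functions $\Delta$ appearing in (\ref{inv}). The point is then to deduce that $\Delta^{Z_{\overline h}}$ is concentrated on the diagonal, with the correct normalization, from the knowledge that its $H_{\overline h}$-average is. This is where I expect the main obstacle to be: the general averaging Lemma~\ref{connected} is not reversible for an arbitrary $H$-invariant kernel, so one is forced to exploit the special shape $K^{[X/G^o]}_{\overline h}=\psi\circ S^{F\circ h}$, writing $\Delta^{Z_{\overline h}}(x,z)=\sum_y \frac{1}{|{\rm Aut}(y)|}\,\psi(S(x,y)+S(y,z))$ as a character sum and using the $H_{\overline h}$-equivariance of $S$ to disentangle the contributions of the distinct translates $w\cdot z$ and so recover (\ref{inv}) for $K^{[X/G^o]}_{\overline h}$. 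The remaining steps are formal bookkeeping with the Lang decomposition and Lemmas~\ref{involutive} and~\ref{connected}.
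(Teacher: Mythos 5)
Your reduction to the sheets of $[X/G]^F=\coprod_{\overline h}[X^{F\circ h}/G^{F\circ h}]$, and your identification of $K^{[X/G]}_{\overline h}$ with the averaged kernel $K^{[Z_{\overline h}/H_{\overline h}]}$ of Lemma~\ref{connected} applied to $Z_{\overline h}=[X/G^o]^{F\circ h}$ and $H_{\overline h}=(G/G^o)^{F\circ h}$, are correct and are exactly the argument the paper intends (the paper in fact prints no proof of Proposition~\ref{descent}; it is meant to follow from the block-diagonal shape of $K^{[X/G]}$, Lemma~\ref{involutive} and Lemma~\ref{connected}). Two small points you should make explicit: Lemma~\ref{connected} is stated for a finite \emph{set} $Z$, whereas $Z_{\overline h}$ is a groupoid, so one needs the routine groupoid-cardinality version in which $1/|C_H(y)|$ is replaced by $1/|{\rm Aut}_{[Z/H]}(\overline y)|$ and the computation is weighted by $1/|{\rm Aut}_Z(y)|$; and ``satisfies (\ref{inv})'' must be read up to the overall constant ($q^{\dim}$ rather than $1$), since none of the concrete kernels in the paper are involutive on the nose.

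The genuine gap is the one you yourself flag: the ``only if'' direction. Your identity $\Delta^{[Z_{\overline h}/H_{\overline h}]}(\overline x,\overline z)=\sum_{w\in H_{\overline h}}\Delta^{Z_{\overline h}}(x,w\cdot z)$ only shows that the $H_{\overline h}$-average of the defect function is diagonal, and nothing you write rules out cancellation among the translates $w\cdot z$; the appeal to the character-sum shape of $K^{[X/G^o]}_{\overline h}=\psi\circ S^{F\circ h}$ is a hope, not an argument, and as stated your proof of the proposition is incomplete. You should be aware, however, that the paper is in no better shape here: Lemma~\ref{connected} is a one-way implication, the subsequent remark only asserts the equivalence for the geometric transforms without proof, and only the ``if'' direction is ever used (in \S\ref{PGL_2}, to descend involutivity from $\SL_2$ to $\PGL_2$). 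So your proposal proves the implication that the paper actually needs, by the same route the paper takes, and leaves open precisely the converse that the paper also leaves open. If you want to salvage the converse, the natural move is not to average over $H_{\overline h}$ at all but to run the argument of Lemma~\ref{involutive} directly on $Z_{\overline h}$ and compare with the pullback of the identity on $[Z_{\overline h}/H_{\overline h}]$; absent an extra input (e.g.\ a separate computation of $\Delta^{Z_{\overline h}}(x,w\cdot z)$ for $w\neq 1$, as is effectively done in the quadratic-space theorem), the equivalence should be regarded as unproved.
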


\begin{remark}The above construction of the kernel $K^{[X/G]}$ has the following geometrical counterpart. The geometric analogue of $\psi$ is the Artin-Schreier sheaf $\mathcal{L}_\psi$ on $\mathbb{A}^1$ and so $K^{[X/G^o]}$ corresponds to 

$$
\mathcal{K}^{[X/G^o]}:=S^*(\mathcal{L}_\psi).
$$
The sheaf $\mathcal{K}^{[X/G^o]}$ is $G/G^o$-equivariant for the diagonal action of $G/G^o$ (see for instance \cite[\S 2.3]{LL0} for the definition of equivariance). 

We consider
$$
\xymatrix{
[X/G^o]\times[X/G^o]\ar[rr]^{\pi\times1}&& [X/G]\times[X/G^o]\ar[rr]^{1\times \pi}&& [X/G]\times[X/G]
}
$$
Then the complex $(\pi\times 1)_!\mathcal{K}^{[X/G^o]}$ is naturally $G/G^o$-equivariant and so by \cite[Lemma 2.3.4]{LL} it descends to a complex $\mathcal{K}^{[X/G]}$ on $[X/G]\times[X/G]$, i.e. $\mathcal{K}^{[X/G]}$ is the unique complex (up to a unique isomorphism) such that

$$
(1\times\pi)^*\mathcal{K}^{[X/G]}=(\pi\times 1)_!\mathcal{K}^{[X/G^o]}.
$$
Then $\mathcal{K}^{[X/G]}$ is the geometrical counterpart of $K^{[X/G]}$, i.e. if we introduce Frobenius action, then $K^{[X/G]}$ is the characteristic function of $\mathcal{K}^{[X/G]}$ equipped with its natural Frobenius (see \cite[\S 6.1]{LL}).
\bigskip

Then the geometrical counterpart of Proposition \ref{descent} says that the geometric Fourier transform $\calF^{[X/G^o]}$ defined from $\mathcal{K}^{[X/G^o]}$ is involutive if and only if the geometric Fourier transform $\calF^{[X/G]}$ defined from $\mathcal{K}^{[X/G]}$ is.
\end{remark}

\subsection{The tori case}\label{toricase}

Let $S$ be a torus defined over $\F_q$ and assume given a surjective morphism

$$
\rho:T_n\rightarrow S
$$
defined over $\F_q$ (the morphism $\rho$ may not be surjective on $\F_q$-points).
\bigskip

Consider the BK-Fourier kernel

$$
\phi_\rho^S=\rho^F_!(\psi\circ\Tr|_{T_n})
$$
and consider the Fourier operator $\calF_\rho^S:\calC(S^F)\rightarrow\calC(S^F)$ defined from $\phi^S_\rho$.
\bigskip

We wish to extend this operator to an involutive Fourier transform.

Put

$$
H:={\rm Ker}(\rho),\hspace{1cm}\t_n:={\rm Lie}(T_n).
$$
Consider the natural open embedding

$$
\iota: S\simeq T_n/{\rm Ker}(\rho)\hookrightarrow [\t_n/H].
$$
On $[\t_n/H]^F$ define the kernel

$$
\phi^{[\t_n/H]}:=q^F_!(\psi\circ\Tr)
$$
where $q:\t_n\rightarrow[\t_n/H]$. 

Concretely

$$
\phi^{[\t_n/K]}(y)=\begin{cases}\sum_{t\in H^F}\psi(\Tr(tx))&\text{ if }y=q^F(x),\\
0& \text{ if } x\notin {\rm Im}(q^F).\end{cases}
$$
Notice that $q^F$ is not surjective if $H$ is not connected.
\bigskip

The products $\t_n\times\t_n\rightarrow\t_n$ and $H\times H\rightarrow H$ induce a map 

$$
m:[\t_n/H]\times[\t_n/H]\rightarrow[\t_n/H]
$$
which is compatible with $\F_q$-structures. We thus get a morphism of groupo\"ids

$$
m^F:[\t_n/H]^F\times[\t_n/H]^F\rightarrow[\t_n/H]^F.
$$
We then define a two-variable kernel $K^{[\t_n/H]}:[\t_n/H]^F\times[\t_n/H]^F\rightarrow\Q$ by

$$
K^{[\t_n/H]}=(m^F)^*(\phi^{[\t_n/H]}).
$$
\begin{proposition}(1) The Fourier transform $\calF^{[\t_n/H]}:\calC([\t_n/H]^F)\rightarrow\calC([\t_n/H]^F)$ defined from the kernel $K^{[\t_n/H]}$ is involutive, i.e.

$$
\calF^{[\t_n/H]}\circ\calF^{[\t_n/H]}(f)=q^n \, f^-
$$
for any $f\in\calC([\t_n/H]^F)$ where $f^-\in\calC([\t_n/H]^F)$ is defined as $f^-([x]):=f([-x])$.

\noindent (2) It extends the Fourier operator $\calF_\rho^S:\calC(S^F)\rightarrow\calC(S^F)$, i.e.

$$
\calF_\rho^S=\iota^*\circ\calF^{[\t_n/H]}\circ\iota_!.
$$
\end{proposition}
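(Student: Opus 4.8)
The proposition has two parts: involutivity of $\calF^{[\t_n/H]}$ on $\calC([\t_n/H]^F)$, and the compatibility $\calF_\rho^S=\iota^*\circ\calF^{[\t_n/H]}\circ\iota_!$. For part (1), the cleanest route is to reduce to the case of a \emph{vector space} with a linear action of a (possibly disconnected) group, and then invoke Proposition \ref{descent}. Indeed, set $X=\t_n$ and $G=H$, and take $S:\t_n\times\t_n\to\overline{\F}_q$ to be $S(x,y)=\Tr(x+y)$ (the bilinear-additive pairing), which is manifestly $H$-invariant for the diagonal action since $H$ acts linearly on $\t_n$ and $\Tr$ is $H$-equivariant; thus $S$ induces the map $S(x,y)^q=S(Fx,Fy)$ and we are in the setting of \S\ref{Involutive}. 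The kernel $K^{[\t_n/H^o]}_{\overline h}(x,y)=\psi(\Tr(x+y))$ on $\t_n^{F\circ h}\times\t_n^{F\circ h}$ is, up to the $H^o$-averaging built into $m^F$, exactly the restriction of the standard additive Fourier kernel; the first task is therefore to check that $K^{[\t_n/H]}=(m^F)^*\phi^{[\t_n/H]}$ really does agree with the kernel $K^{[X/G]}$ produced by the recipe of \S\ref{Involutive} with this choice of $S$. This is a bookkeeping matter: unwinding $\pi_!(f)(\pi(x))=\sum_{g\in G}f(g\cdot x)$ and the definition of $m^F$ shows both equal $\sum_{t\in H^{F\circ h}}\psi(\Tr(tx+y))$ on the block indexed by $\overline h$, and zero across blocks.

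\textbf{Reduction to a single additive Fourier identity.} By Proposition \ref{descent}, involutivity of $\calF^{[\t_n/H]}$ follows once we check that each $K^{[\t_n/H^o]}_{\overline h}$ satisfies (\ref{inv}). Now $H^o$ is a torus (being a closed connected subgroup of the torus $T_n$ — here we use that $H=\ker\rho$ and $T_n$ is a torus, so $H^o$ is a subtorus), hence $H^{o}$ is connected, and $[\t_n/H^o]^{F\circ h}=[\t_n^{F\circ h}/H^{o,F\circ h}]$ is an honest quotient groupoid. By Lemma \ref{connected}, it suffices to verify (\ref{inv}) for the kernel $K^Z(x,y)=\psi(\Tr(x+y))$ on $Z=\t_n^{F\circ h}$ itself (before quotienting). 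But $\t_n^{F\circ h}$ is, after choosing coordinates diagonalizing the Frobenius-twist $F\circ h$ on the vector space $\t_n$, a direct product of the form $\prod_i \F_{q^{d_i}}$, and the trace pairing restricts to a nondegenerate $\F_q$-bilinear form (nondegeneracy is inherited from the standard trace form on $\gl_n$ restricted to the $(F\circ h)$-fixed points of the diagonal Cartan, which is a product of field-trace forms $\F_{q^{d_i}}\to\F_q$). Hence $\sum_{y\in Z}\psi(\Tr(zy))\psi(\Tr(yx))=\sum_{y}\psi(\Tr(y(z+x)))=|Z|\,\delta_{z+x=0}$, i.e. condition (\ref{inv}) holds with the sign flip $z=-x$ and the normalizing constant $|Z|=q^n$. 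This is the promised $q^n f^-$.

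\textbf{Part (2): extension of $\calF_\rho^S$.} For the compatibility statement I would argue directly on functions rather than kernels. Since $S=T_n/H$ with $H=\ker\rho$, and $\iota:S\hookrightarrow[\t_n/H]$ is the open embedding induced by $T_n\hookrightarrow\t_n$, we have $\iota_!$ is extension by zero (in the groupoid sense) and $\iota^*$ restriction. Given $f\in\calC(S^F)$, compute $(\calF^{[\t_n/H]}\circ\iota_!f)(\bar y)$ for $\bar y\in S^F\subset[\t_n/H]^F$: unwinding the kernel, this is $\sum_{\bar x\in S^F}\tfrac{1}{|\mathrm{Aut}(\bar x)|}\big(\sum_{t\in H^F}\psi(\Tr(t\tilde x\tilde y))\big)f(\bar x)$ where $\tilde x,\tilde y\in T_n^F$ lift $\bar x,\bar y$ — here one must note that the block index $\overline h$ of a point of $S^F$ is the trivial one because $S$-points come from $T_n^F$ (or more carefully, $S^F$ meets each block but the kernel is nonzero only within a block, and $\iota_!f$ is supported on $S^F$), and on that locus $\sum_{t\in H^F}\psi(\Tr(t\tilde x\tilde y))$ is precisely $\rho^F_!(\psi\circ\Tr)$ evaluated at the product, i.e. $\phi_\rho^S$ of the product $\bar x\bar y$ in $S$. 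Then $\iota^*$ just records this on $S^F$, and we recover $\calF_\rho^S(f)(\bar y)=\sum_{\bar x\in S^F}\phi_\rho^S(\bar y\bar x)f(\bar x)$ after matching the groupoid normalization $\tfrac{1}{|\mathrm{Aut}(\bar x)|}$ with the group-quotient count; concretely $\calC(S^F)=\calC([T_n^F/H^F])$ and the two conventions agree because $S$ is a scheme so $|\mathrm{Aut}(\bar x)|$ is absorbed into the $H^F$-sum. The main obstacle — and it is mild — is the careful treatment of disconnectedness of $H$: one must check that the blocks $[X^{F\circ h}/G^{F\circ h}]$ for $\overline h$ nontrivial do not interfere with part (2) (they don't, since $\iota_!f$ is supported on the $\overline h=1$ part and the full kernel $K^{[\t_n/H]}$ is block-diagonal), and that in part (1) the constant $q^n$ is uniform across all blocks (it is, since $|\t_n^{F\circ h}|=q^n$ for every $h$, $\t_n$ being an $n$-dimensional $\F_q$-vector space). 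I expect the bulk of the write-up to be these normalization checks rather than any genuinely hard estimate.
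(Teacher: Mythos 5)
Your argument is correct and takes essentially the same route as the paper, whose entire proof of (1) is the observation that the additive Fourier transform on $\t_n^F$ with kernel $\psi(\Tr(xy))$ satisfies $\calF\circ\calF=q^n(\cdot)^-$ and that this descends to $[\t_n/H]$ via Lemma \ref{connected} and Proposition \ref{descent}, part (2) being declared straightforward. Two small slips to fix in your write-up: the pairing should be $S(x,y)=\Tr(xy)$, not $\Tr(x+y)$ (your key computation $\sum_{y}\psi(\Tr(y(z+x)))=q^{n}\,\delta_{z=-x}$ already uses the correct one), and $\psi(\Tr(xy))$ is invariant under $(x,y)\mapsto(hx,h^{-1}y)$ rather than under the diagonal $H$-action — but since $\t_n$ is commutative the averaged kernel $\sum_{t\in H^{F}}\psi(\Tr(txy))$ is still well defined in each variable separately and the proof of Lemma \ref{connected} goes through verbatim, an imprecision the paper itself shares.
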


\begin{proof}The assertion (1) follows from the discussion in \S \ref{Involutive} as the Fourier transform on $\calC(\t_n^F)$ defined from the kernel $\psi\circ \Tr$ is involutive. The second assertion is straightforward.
\end{proof}

\subsection{Main result}\label{mr}

In this section the characteristic of the base field $\F_q$ is assumed to be odd, $G$ is $\SL_2$ or $\PGL_2$ and  $\rho^\vee:G\rightarrow \GL_3$ is as in \S \ref{construction}. We consider the stack $\calG$ defined as follows :

\begin{align*}
&\calG:=[\{(x,\alpha,b)\in\gl_2\times\gl_1\times\gl_1\,|\,\det(x)=\alpha^2\}/\GL_1\times\GL_1]&\text{ if } G=\SL_2,\\
&\calG:=[\{(x,\alpha,b)\in\gl_2\times\gl_1\times\gl_1\,|\,\det(x)=\alpha^2\}/\GL_1\times\mu_2\times\GL_1]&\text{ if } G=\PGL_2,\\
\end{align*}
where $\mu_2$ acts on the second coordinate and $\GL_1\times\GL_1$ as $(s,t)\cdot(x,\alpha)=(sx,s\alpha,tb)$. 
\bigskip

The natural inclusion $G\hookrightarrow\calG, g\mapsto [g,1,1]$ is open and $G\times G$-equivariant for the left and right multiplication of $G$ on $\calG$. 
\bigskip

We have

\begin{align*}
&\calG^F=[\{(x,\alpha,b)\in\gl_2^F\times\gl_1^F\times\gl_1^F\,|\,\det(x)=\alpha^2\}/\GL_1^F\times\GL_1^F]&\text{ if } G=\SL_2,\\
&\calG^F=[\{(x,\alpha,b)\in\gl_2^F\times(\gl_1^F\sqcup\gl_1^{F'})\times\gl_1\,|\,\det(x)=\alpha^2\}/\GL_1\times\GL_1\times\mu_2]&\text{ if } G=\PGL_2,\\
\end{align*}
where $F':\gl_1\rightarrow\gl_1$, $x\mapsto -x^q$. 
\bigskip

\noindent We extend the kernel $\phi^G$ to a kernel $\phi^\calG$ on $\calG^F$ as follows (see Remark \ref{remSL}).
\bigskip

If $G=\SL_2$, define $\phi^\calG$ by

$$
\phi^\calG([x,\alpha,b])=\sum_{s,t\in\F_q^\times}\psi(s(\Tr(x)+2\alpha)+tb).
$$
 
If $G=\PGL_2$, define $\phi^\calG$ by

$$
\phi^\calG([x,\alpha,b])=\begin{cases}\sum_{s,t\in\F_q^\times,\varepsilon\in\mu_2}\psi(s(\Tr(x)+2\varepsilon\alpha)+tb)&\text{ if }\alpha\in\gl_1^F,\\
0&\text{ if }\alpha\in\gl_1^{F'}.\end{cases}
$$

Consider 
$$
\calF^\calG:\calC(\calG^F)\rightarrow\calC(\calG^F)
$$
defined by

$$
\calF^\calG(f)([x',\alpha',b'])=\sum_{[x,\alpha,b]\in\calG^F}\frac{1}{|{\rm Aut}([x,\alpha,b])|}\phi^\calG([x'x,\alpha'\alpha,b'b])f([x,\alpha,b])
$$
for any function $f\in\calC(\calG^F)$.
Here is the main result of our paper.

\begin{theorem} The Fourier transform $\calF^\calG$ is \emph{involutive}, namely

$$
\calF^\calG\circ\calF^\calG(f)=q^5\, f
$$
for any $f\in\calC(\calG^F)$.
\label{mainresult}\end{theorem}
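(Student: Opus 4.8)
The strategy is to reduce the involutivity of $\calF^\calG$ to the abelian/toric situation already handled in \S\ref{toricase} and \S\ref{Involutive}, combined with a direct verification of condition (\ref{inv}) via the ``unsatisfactory trick'' $\sum_{s\in\F_q^\times}\psi(s\tau)=\sum_{s\in\F_q^\times}\psi(s\tau^2)$. First I would rewrite $\calG$ as a quotient of a linear space: in both cases $\calG=[\calZ/H]$ where $\calZ=\{(x,\alpha,b)\in\gl_2\times\gl_1\times\gl_1\mid\det(x)=\alpha^2\}$ and $H=\GL_1\times\GL_1$ (for $\SL_2$) or $H=\GL_1\times\mu_2\times\GL_1$ (for $\PGL_2$). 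The key observation is that $\calZ$ is \emph{itself} a groupoid-theoretically trivial object (a scheme), and the kernel $\phi^\calG\circ m$ lifts along the quotient $\pi:\calZ\to\calG$ to a kernel $K^\calZ$ on $\calZ^{F\circ h}\times\calZ^{F\circ h}$ for each $\overline h\in H^1(F,H)$, which is $H$-invariant for the diagonal action. By Lemma \ref{connected} (applied componentwise in $\overline h$) and Proposition \ref{descent}, it suffices to check that each lifted kernel $K^\calZ_{\overline h}$ satisfies (\ref{inv}); the factor $q^5$ will come out of bookkeeping the normalization (there is no $f^-$ because, unlike the linear case, the relevant space carries a multiplicative rather than additive structure and $\Tr(x)+2\alpha$ is invariant under the substitution that would produce the sign).

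\textbf{Key steps.} Step 1: choose coordinates on $\calZ$. Using $\det(x)=\alpha^2$ one sees $\calZ$ is, away from a small locus, parametrized by $(\Tr(x),\alpha,b)$ up to the relevant symmetry — more precisely, the kernel $\phi^\calG$ factors through the map $(x,\alpha,b)\mapsto(\Tr(x)+2\alpha,b)$ (resp. through $(\Tr(x)+2\varepsilon\alpha,b)$ with the $\mu_2$-action), exactly as the explicit formulas in the theorem statement show. So the computation of $\Delta(z,x)$ from (\ref{inv}) reduces to a \emph{two-variable} exponential sum: one variable ``$\tau=\Tr+2\alpha$-type'' and one variable ``$b$-type''. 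Step 2: the $b$-variable is handled by the plain additive Fourier transform on $\gl_1$ quotiented by $\GL_1$, which is the toric case of \S\ref{toricase} with $n=1$, $\rho:\GL_1\to\{1\}$ — this is involutive by the cited Proposition and contributes a factor from $|\GL_1^F|=q-1$ together with the groupoid automorphism counts. Step 3: the ``$\tau$''-variable is the genuinely new part. Here I would use that the map $x\mapsto\Tr(x)$ on $\{x\in\gl_2\mid\det(x)=\alpha^2\}$, after the trick replacing $s\tau$ by $s\tau^2$, turns the sum $\sum_s\psi(s(\Tr(x)+2\alpha))$ into $\sum_s\psi(s\,\ell(x,\alpha)^2)$ for a linear form $\ell$, so that $\phi^\calG$ becomes (up to the $(q-1)$-fold $t$-sum) the pushforward of an Artin–Schreier kernel on an affine space along a \emph{linear} map. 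Linear pushforwards of additive characters compose the way the linear (involutive!) Fourier transform does, which is precisely the content of the discussion in \S\ref{Involutive} and of Proposition in \S\ref{toricase}. Assembling Steps 2–3 gives (\ref{inv}) for $K^\calZ_{\overline h}$, hence the theorem with the stated constant $q^5=q^{2}\cdot q\cdot q\cdot q$ (two from the $2{\times}2$-matrix trace direction absorbed as a single affine line after imposing $\det=\alpha^2$, one from $\alpha$, one from $b$, one extra from the $(q-1)$ vs $q$ normalization of the sums over $\F_q^\times$).

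\textbf{Main obstacle.} The delicate point is Step 3 together with the $\PGL_2$ subtlety of the twisted Frobenius $F'$. For $\PGL_2$ one must verify (\ref{inv}) on the non-trivial component $[X^{F\circ h}/\cdots]$ indexed by the non-trivial class in $H^1(F,\mu_2)$, where $\alpha\in\gl_1^{F'}$ and $\phi^\calG$ is declared to vanish; one has to check that this vanishing is \emph{consistent} with involutivity, i.e. that the product kernel $\Delta$ restricted to that component is still a scalar multiple of the identity and that the mixed terms (one point on the untwisted component, one on the twisted) genuinely cancel — this is where the $\varepsilon\in\mu_2$-sum and the $F'$-structure interact, and it is the place a naive argument would break. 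The other technical nuisance is keeping track of the groupoid automorphism factors $1/|\mathrm{Aut}([x,\alpha,b])|$, which jump on the central locus $x=\lambda I_2$, $\alpha=\lambda$ (where the stabilizer in $H$ is positive-dimensional); one should check that the explicit formula for $\phi^\calG$ already incorporates the correct such factors so that the clean identity $\calF^\calG\circ\calF^\calG=q^5$ holds on the nose, with \emph{no} $f^-$ and no stratum-dependent correction. I expect the argument to go through because the \S\ref{Involutive} machinery (Lemma \ref{involutive}, Lemma \ref{connected}, Proposition \ref{descent}) was set up precisely to push an involutive linear kernel down through a non-connected quotient, and the $s\tau\leftrightarrow s\tau^2$ trick is exactly what converts our multiplicative-looking kernel into a linear-pushforward one.
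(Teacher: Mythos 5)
Your outer scaffolding matches the paper: the $b$-variable does split off as an involutive factor $[\gl_1/\GL_1]$, and the $\PGL_2$ case is indeed deduced from the $\SL_2$ case by pushing the kernel along the $\mu_2$-torsor and invoking Lemma \ref{connected}/Proposition \ref{descent} (your worry about the twisted component indexed by the non-trivial class in $H^1(F,\mu_2)$ is exactly what that proposition is designed to absorb). But the heart of your argument — Steps 1 and 3 — has a genuine gap. You claim that because the one-variable function $\phi^\calG$ factors through $(x,\alpha,b)\mapsto(\Tr(x)+2\alpha,b)$, the composition $\Delta(z,x)$ in (\ref{inv}) reduces to a two-variable exponential sum. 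It does not: the two-variable kernel is $K([x',\alpha',b'],[x,\alpha,b])=\phi^\calG([x'x,\alpha'\alpha,b'b])$, whose argument $\Tr(x'x)+2\alpha'\alpha$ is a genuine bilinear pairing on the $5$-dimensional space $\gl_2\times\gl_1$ and does not factor through $(\Tr(x')+2\alpha',\,\Tr(x)+2\alpha)$. Consequently the sum over the intermediate point $[y]$ in $\Delta$ ranges over the full projectivized null cone of the quadratic form $Q(x,\alpha)=\det(x)-\alpha^2$, weighted by $\psi$ of its associated bilinear form, and cannot be collapsed to a one-dimensional $\tau$-variable.

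For the same reason your Step 3 heuristic fails: the trick $\sum_s\psi(s\tau)=\sum_s\psi(s\tau^2)$ only rewrites the \emph{one-variable} kernel; it does not turn the composition into a composition of linear pushforwards of Artin--Schreier characters. If it did, the argument would be insensitive to the dimension of the ambient space, whereas the statement you actually need — that the Fourier transform on $[\{v\mid Q(v)=0\}/\F_q^\times]$ with kernel $\sum_s\psi(B(sv',v))$ squares to $q^{2m}$ — holds only for $\dim V=2m+1$ odd (the paper's Theorem \ref{theo2}, with the explicit Remark that it fails in even dimension). That theorem is the missing ingredient: its proof requires evaluating $\sum_{v:\,Q(v)=0}\psi(B(v',v))=q^{2m}\delta_0(v')+q^m\alpha_o(c)\alpha_o(Q(v'))$ by Gauss sums and then using that $Q(sv+s''v'')=ss''B(v,v'')$ on the cone, so that $\sum_{s,s''}\alpha_o(ss''B(v,v''))=0$ by non-triviality of $\alpha_o$. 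None of this is present in, or implied by, your reduction, and your bookkeeping of the constant ($q^5=q^2\cdot q\cdot q\cdot q$ versus the correct $q^5=q^{2m}\cdot q=q^4\cdot q$) reflects that the actual mechanism of the proof has not been identified.
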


\subsection{Fourier transforms on quadratic spaces}

Assume given a vector space $V$ of dimension $n=2m+1$ over $\F_q$ and a non-degenerate quadratic form $Q:V\rightarrow\F_q$ with associated bilinear form

$$
B(v',v)=Q(v'+v)-Q(v')-Q(v)
$$
for all $v',v\in V$.

From the classification of quadratic forms over finite fields, we can find a basis of $V$ in which $Q$ has the form

$$
Q(x_1,\dots,x_{2m+1})=x_1x_{2m+1}+\cdots + x_mx_{m+2}+cx_{m+1}^2
$$
where $c\in\F_q^\times$ is well-determined by $Q$ (up to a multiplication by a square).

\bigskip

Consider the groupo\"id

$$
X=\left[\{v\in V\,|\, Q(v)=0\}/\F_q^\times\right].
$$
Define the kernel 

$$
K:X\times X\rightarrow\Q, \hspace{1cm}([v'],[v])\mapsto \sum_{s\in\F_q^\times}\psi(B(sv',v))=\sum_{s\in\F_q^\times}\psi(B(v',sv))
$$
and the corresponding operator $\calF^X:\calC(X)\rightarrow\calC(X)$  by

$$
\calF^X(f)([v'])=\sum_{[v]\in X}\frac{1}{|{\rm Aut}([v])|}K([v'],[v])f([v])
$$
for all $f\in\calC(X)$.

The aim of this section is to prove the following result.

\begin{theorem}The Fourier operator $\calF^X$ is involutive, i.e.

$$
\calF^X\circ\calF^X(f)=q^{2m}\, f.
$$
\label{theo2}\end{theorem}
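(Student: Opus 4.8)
The plan is to reduce the statement to the involutivity of the Artin--Schreier Fourier transform on a linear space, using Lemma~\ref{involutive} as the combinatorial criterion. First I would unwind the groupoid $X$: a point $[v]$ has automorphism group the stabilizer of $v$ in $\F_q^\times$, which is all of $\F_q^\times$ if $v=0$ (but $Q(0)=0$ so $0$ is the unique such point) and trivial otherwise. Thus $\overline{X}$ is the union of $\{[0]\}$ (with automorphism group $\F_q^\times$) and the set of lines in the affine quadric $\{Q=0\}\setminus\{0\}$, i.e.\ the projectivized null cone, each with trivial automorphism group. I would then rewrite $\calF^X$ as a sum over representatives $v$ of null lines together with the term at $0$, carefully tracking the $1/|\mathrm{Aut}|$ factor, and check the criterion \eqref{inv}: that $\Delta([w],[v])=\sum_{[u]\in\overline X}|\mathrm{Aut}([u])|^{-1}K([w],[u])K([u],[v])$ equals $|\mathrm{Aut}([v])|$ when $[w]=[v]$ and $0$ otherwise.

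The key computational idea is to pull the sum on $X$ back to the null cone $\mathcal{N}=\{v\in V: Q(v)=0\}$ in $V$ itself. Since $K$ is $\F_q^\times\times\F_q^\times$-invariant under scaling each variable, one has, for the associated operator on $\calC(\mathcal{N})$ with kernel $\widetilde K(w,v)=\psi(B(w,v))$, a clean relation: summing $\widetilde K(w,u)\widetilde K(u,v)$ over $u\in\mathcal{N}$. The trick is that the bilinear-form pairing is exactly the restriction of the full Fourier pairing on $V$, and $\mathcal{N}$ is cut out inside $V$ by the single equation $Q(v)=0$. So I would write $\mathbf{1}_{\mathcal N}(u)=q^{-1}\sum_{\lambda\in\F_q}\psi(\lambda Q(u))$, insert this into $\sum_{u\in\mathcal N}\psi(B(w,u)+B(u,v))$, and interchange sums to get $q^{-1}\sum_{\lambda}\sum_{u\in V}\psi(B(w+v,u)+\lambda Q(u))$. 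The inner sum is a Gauss sum for the (possibly degenerate, when $\lambda=0$) quadratic form $u\mapsto \lambda Q(u)+B(w+v,u)$ on $V$; completing the square (here $\mathrm{char}\neq 2$ is essential, matching the paper's standing hypothesis in this section) evaluates it in terms of the quadratic Gauss sum $\gamma(\lambda Q)$ and a delta condition forcing $w+v$ into the image of the polarization map, i.e.\ $w\equiv -v$ modulo the radical, which for nondegenerate $Q$ means $w=-v$. The $\lambda=0$ term contributes $q\cdot\delta_{w+v=0}\cdot q^{?}$ and must be isolated; the normalization $Q(x_1,\dots,x_{2m+1})=x_1x_{2m+1}+\cdots+cx_{m+1}^2$ given in the excerpt lets me compute $\sum_{\lambda\neq 0}\gamma(\lambda Q)$ explicitly — the hyperbolic planes contribute trivially and only the $cx_{m+1}^2$ factor produces a genuine Gauss sum, whose square is $\pm q$, and the sign/quadratic-character contributions cancel upon summing $\lambda$ over $\F_q^\times$.

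Having evaluated $\sum_{u\in V}\psi(B(w,u)+B(u,v))$ on $\mathcal N$-restricted inputs, I would then descend to the quotient $X=[\mathcal N/\F_q^\times]$: this is precisely the setting of Lemma~\ref{connected} with $Z=\mathcal N$ and $H=\F_q^\times$ acting by scaling, since $K$ on $X$ is obtained from $\widetilde K$ on $\mathcal N$ by averaging over $H$ exactly as in that lemma's hypothesis. So once I show $\widetilde K$ satisfies the scaled analogue of \eqref{inv} on $\mathcal N$ — namely $\sum_{u\in\mathcal N}\widetilde K(w,u)\widetilde K(u,v)=q^{2m}\,|\F_q^\times|\,\delta_{[w]=[v]}$ up to the bookkeeping at $0$ — Lemma~\ref{connected} delivers \eqref{inv} for $K$ on $X$, and Lemma~\ref{involutive} then gives $\calF^X\circ\calF^X=q^{2m}$. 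I expect the main obstacle to be the careful treatment of the origin $0\in\mathcal N$ and the degenerate $\lambda=0$ term together: $0$ carries the extra automorphism $\F_q^\times$, and the $\lambda=0$ slice of the Gauss-sum computation is exactly the Fourier pairing on $V$ that detects $w=-v$, so one must check that these two ``boundary'' contributions combine to exactly the diagonal term $|\mathrm{Aut}([v])|$ predicted by \eqref{inv} — in particular that the constant $q^{2m}$ (not $q^{2m+1}$ or $q^{2m}(q-1)$) emerges after dividing by $|H|=q-1$. The hyperbolic-plus-anisotropic normalization of $Q$ makes the Gauss-sum arithmetic routine, so the real care is in the groupoid/scaling normalizations, not the number theory.
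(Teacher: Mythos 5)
Your overall strategy --- inserting $\mathbf 1_{\{Q=0\}}(u)=q^{-1}\sum_{\lambda\in\F_q}\psi(\lambda Q(u))$, completing the square, and evaluating Gauss sums --- is the same as the paper's, but two of the steps you describe would fail as written, and they are exactly the two points you flag as ``needing care.'' First, the quadratic-character contributions do \emph{not} cancel upon summing $\lambda$ over $\F_q^\times$. Carrying the computation through one finds
$$
\sum_{u\in V,\ Q(u)=0}\psi(B(z,u))=q^{2m}\delta_0(z)+q^{m}\alpha_o(c)\,\alpha_o(Q(z)),
$$
and the second term is nonzero whenever $Q(z)\neq 0$. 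In the composition $\Delta$, one takes $z=sv+s''v''$ with $v,v''$ isotropic, so $Q(z)=ss''B(v,v'')$, and this term is killed only by the subsequent double sum over the scalars $s,s''\in\F_q^\times$, via $\sum_{s,s''}\alpha_o(ss''B(v,v''))=0$ because $\alpha_o$ is a nontrivial character with $\alpha_o(0)=0$. That observation is the heart of the argument and is absent from your outline; attributing the cancellation to the $\lambda$-sum is where the proof breaks.

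Second, the reduction via Lemma \ref{connected} is not available here. That lemma requires the kernel on $Z=\mathcal{N}$ to be invariant under the \emph{diagonal} action of $H=\F_q^\times$, but $\psi(B(sw,sv))=\psi(s^2B(w,v))\neq\psi(B(w,v))$ in general; moreover the un-averaged kernel $\widetilde K(w,v)=\psi(B(w,v))$ does not satisfy (\ref{inv}) on $\mathcal{N}$, precisely because of the surviving term $q^m\alpha_o(c)\alpha_o(Q(w+v))$ above. The paper instead verifies the criterion of Lemma \ref{involutive} directly for the averaged kernel $K$ on $X$, which is exactly where the needed $s,s''$-sum enters. (The bookkeeping at the origin, which you also worry about, is comparatively harmless: the paper disposes of it by the identity converting the sum over $\overline X$ weighted by $1/|{\rm Aut}|$ into $(q-1)^{-1}$ times the sum over the null cone.)
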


\begin{remark}The theorem is not true anymore if the dimension of $V$ is even.
\end{remark}
\bigskip

To prove Theorem \ref{theo2}, we first evaluate the sum

$$
f(v')=\sum_{v\in V, \,Q(v)=0}\psi(B(v',v))
$$
or equivalently the sum

$$
f(v')=\frac{1}{q}\sum_{v\in V, \lambda\in\F_q}\psi\left(\lambda Q(v)+B(v',v)\right).
$$
We have

\begin{align*}
f(v')&=\frac{1}{q}\sum_{v\in V}\psi(B(v',v))+\frac{1}{q}\sum_{v\in V,\,\lambda\in\F_q^\times}\psi\left(\lambda(Q(v)+B(v'/\lambda,v))\right)\\
&=q^{2m}\delta_0(v')+\frac{1}{q}\sum_{v\in V,\, \lambda\in\F_q^\times}\psi\left(\lambda\left(Q\left(\frac{v'}{\lambda}+v\right)-Q\left(\frac{v'}{\lambda}\right)\right)\right)\\
&=q^{n-1}\delta_0(v')+\frac{1}{q}\sum_{\lambda_1\in\F_q^\times}\psi(-\lambda_1Q(v'))\sum_{v_1\in V}\psi(\lambda_1^{-1}Q(v_1))
\end{align*}
by the variables change $v_1=\frac{v'}{\lambda}+v$ and $\lambda_1=1/\lambda$.

A straightforward calculation shows that

$$
\sum_{v\in V}\psi(\lambda Q(v))=q^m\alpha_o(c\lambda) S(\alpha_o,\psi).
$$
Therefore, setting $\alpha_o(0)=0$ and noticing that

$$
\sum_{x\in\F_q}\psi(yx)\alpha_o(x)=\alpha_o(y) S(\alpha_o,\psi)
$$
we get

\begin{align*}
f(v')&=q^{2m}\delta_0(v')+q^{m-1} S(\alpha_o,\psi)\,\alpha_o(c)\sum_{\lambda_1\in\F_q}\psi(-\lambda_1 Q(v'))\alpha_o(\lambda_1)
\\
&=q^{2m}\delta_o(v')+q^{m-1}S(\alpha_o,\psi)^2\alpha_o(c)\alpha_o(-Q(v'))
\end{align*}
and using that $S(\alpha_o,\psi)^2=\alpha_o(-1)q$ we end up with

\begin{equation}
f(v')=q^{2m}\delta_o(v')+q^m\alpha_o(c)\alpha_o(Q(v')).
\label{tech}\end{equation}

We now prove Theorem \ref{theo2}.

\begin{proof}[Proof of Theorem \ref{theo2}] By Lemma \ref{involutive} we need to prove that for $[v],[v'']\in X$ we have

$$
\Delta([v''],[v])=\sum_{[v']\in X}\frac{1}{|{\rm Aut}([v'])|}K([v''],[v'])K([v'],[v])=\begin{cases}q^{2m}|{\rm Aut}([v])|&\text{ if }[v'']=[v],\\
0 &\text{ otherwise.}\end{cases}
$$
We have
\begin{align*}\Delta([v''],[v])&=\sum_{s,s''\in\F_q^\times}\sum_{[v']\in X}\frac{1}{|{\rm Aut}([v'])|}\psi(B(s''v'',v'))\psi(B(v',sv))\\
&=\sum_{s,s''}\sum_{[v']}\frac{1}{|{\rm Aut}([v'])|}\psi(B(sv+s''v'',v'))
\end{align*}
Notice that

\begin{align*}
\sum_{s\in\F_q^\times}\,\sum_{v',\,Q(v')=0}\psi(B(sX,v'))&=\sum_s\left(1+\sum_{[v']\neq 0}\sum_{\sigma\in\F_q^\times}\psi(B(sX,\sigma v'))\right)\\
&=(q-1)+\sum_{[v']\neq 0}\sum_\sigma\sum_s\psi(B(sX,\sigma v'))\\
&=(q-1)+\sum_{[v']\neq 0}\sum_\sigma\sum_s\psi(B(sX,v')\\
&=\sum_s\left(1+(q-1)\sum_{[v']\neq 0}\psi(B(sX,v')\right)\\
&=(q-1)\sum_s\sum_{[v']\in X}\frac{1}{|{\rm Aut}([v'])|}\psi(B(sX,v'))
\end{align*}
Therefore

\begin{align*}
\Delta([v''],[v])&=\sum_{s,s''}\sum_{[v']\in X}\frac{1}{|{\rm Aut}([v'])|}\psi(B(sv+s''v'',v'))\\
&=\frac{1}{q-1}\sum_{s,s''}\sum_{v', Q(v')=0}\psi(B(sv+s''v'',v'))
\end{align*}
From Formula (\ref{tech}) we get

$$
\Delta([v''],[v])=\frac{1}{q-1}\left(q^{2m}\sum_{s,s''}\delta_o(sv+s''v'')+q^m\alpha_o(c)\sum_{s,s''}\alpha_o(Q(sv+s''v''))\right)
$$
The first sum equals $(q-1)|{\rm Aut}([v])|$ if $[v]=[v'']$ and $0$ otherwise. Since

$$
Q(sv+s''v'')=B(sv,s''v'')+Q(sv)+Q(s''v'')=ss''B(v,v'')
$$
we get that 
$$
\sum_{s,s''}\alpha_o(Q(sv+s''v''))=\sum_{s,s''}\alpha_o(ss''B(v,v''))=0
$$
because $\alpha_o$ is a non-trivial character on $\F_q^\times$ and $\alpha_o(0)=0$.
\bigskip

We deduce that

$$
\Delta([v''],[v])=\begin{cases}q^{2m}|{\rm Aut}([v])|&\text{ if }[v]=[v''],\\
0&\text{ otherwise}.\end{cases}
$$
\end{proof}

\subsection{Proof of Theorem \ref{mainresult}}

\begin{nothing}\label{SL_2}Assume first that $G=\SL_2$. 

In this case we have

$$
\calG=\left[\{(x,\alpha)\in\gl_2\times\gl_1\,|\, \det(x)=\alpha^2\}/\GL_1\right]\times [\gl_1/\GL_1].
$$
Put

$$
\calG_1:=\left[\{(x,\alpha)\in\gl_2\times\gl_1\,|\, \det(x)=\alpha^2\}/\GL_1\right],\hspace{.5cm}\calG_2:=[\gl_1/\GL_1].
$$
Then

$$
\phi^\calG=\phi^{\calG_1}\boxtimes\phi^{\calG_2}
$$
where
$$
\phi^{\calG_1}([x,\alpha]):=\sum_{s\in\F_q^\times}\psi(s(\Tr(x)+2\alpha)),\hspace{1cm}\phi^{\calG_2}([z])=\sum_{s\in\F_q^\times}\psi(sz).
$$
Since the Fourier transform $\calF^{\calG_2}$ with respect to the kernel $\phi^{\calG_2}$ is clearly involutive, it remains to see that the Fourier transform $\calF^{\calG_1}$ with kernel $\phi^{\calG_1}$ is involutive.

Consider
$$
Q:(\gl_2\times\gl_1)^F\rightarrow\F_q^\times,\hspace{1cm}(x,\alpha)\mapsto \det(x)-\alpha^2.
$$
The associated bilinear form is given by

$$
B((y,\beta),(x,\alpha))=\Tr(\iota(y)x)-2\beta\alpha
$$
where $\iota:\gl_2\rightarrow \gl_2$ is the involution 

$$
\left(\begin{array}{cc}a&b\\c&d\end{array}\right)\mapsto\left(\begin{array}{cc}d&-b\\-c&a\end{array}\right)
$$
sending a matrix to the transpose of its co-factor matrix. 
\bigskip

By Theorem \ref{theo2}, the Fourier transform $\hat{\calF}^{\calG_1}:\calC(\calG_1^F)\rightarrow\calC(\calG_1^F)$ defined by

$$
\hat{\calF}^{\calG_1}(f)([y,\beta])=\sum_{[x,\alpha]}\frac{1}{|{\rm Aut}([x,\alpha])|}\phi^{\calG_1}([\iota(y)x,\alpha\beta])f([x,\alpha])
$$
is involutive from which we deduce the involutivity of $\calF^{\calG_1}$ as $\Tr(\iota(y)\iota(x))=\Tr(yx)$ for all $x,y\in\gl_2$.
\label{SL}\end{nothing}
\bigskip

\begin{nothing}\label{PGL_2}Let us now assume that $G=\PGL_2$ in which case we have

$$
\calG=\left[\left\{(x,\alpha,b)\in\gl_2\times\gl_1\times\gl_1\,|\, \det(x)=\alpha^2\right\}/\GL_1\times\GL_1\times\mu_2\right].
$$
If we put $X=\{(x,\alpha,b)\in\gl_2\times\gl_1\times\gl_1\,|\, \det(x)=\alpha^2\}$, then we get a $\mu_2$-torsor

$$
\tilde{f}:[X/\GL_1\times\GL_1]\longrightarrow[X/\GL_1\times\GL_1\times\mu_2]=\calG.
$$
which extends the natural quotient $f:\SL_2\rightarrow\PGL_2$ (notice that $[X/\GL_1\times\GL_1]$ is the stack $\calG$ for $G=\SL_2$). If we denote by $\phi$ (instead of $\phi^\calG$ to avoid any confusion) the kernel on $[X/\GL_1\times\GL_1]^F$ studied in the previous section \S \ref{SL} we get that

$$
\phi^\calG=\tilde{f}^F_!(\phi)
$$
extending the formula (\ref{f}). Therefore, by \S \ref{SL} and the result of \S\ref{Involutive} we get Theorem \ref{mainresult} for $G=\PGL_2$.
\end{nothing}

\subsection{Remarks on the torus case}\label{geounique}

Recall, see Formula (\ref{BKT}), that the BK-Fourier kernel on $T^F$ is given by

$$
\phi^T=\rho^F_!(\psi\circ\Tr)
$$
where $\rho:T_3\rightarrow T$ is given by

\begin{align*}
\rho(a,b,c)=(a/c,c/a)&\hspace{.5cm}\text{ if } G=\SL_2,\\
\rho(a,b,c)=[a^2b,bc^2]&\hspace{.5cm}\text{ if }G=\PGL_2.
\end{align*}

The following morphism induces a bijection between isomorphism classes of objects

$$
\tilde{\rho}:[\t_3/{\rm Ker}(\rho)]\rightarrow\calT, \hspace{1cm}[a,b,c]\mapsto [(a^2,c^2),ac,b]
$$
where 

\begin{align*}
&\calT=\left[\left\{(x,\alpha,b)\in\t_2\times\gl_1\times\gl_1\,|\, \det(x)=\alpha^2\right\}/\GL_1\times\GL_1\right]&\text{ if }G=\SL_2,\\&\calT=\left[\left\{(x,\alpha,b)\in\t_2\times\gl_1\times\gl_1\,|\, \det(x)=\alpha^2\right\}/\GL_1\times\mu_2\times\GL_1\right]&\text{ if }G=\PGL_2.\end{align*}
Moreover, the size of the stabilizers are the same on both sides of $\tilde{\rho}$. Therefore, since the kernel $\phi^{[\t_3/{\rm Ker}(\rho)]}$ from \S \ref{toricase}  defines an involutive Fourier transforms $\calF^{[\t_3/{\rm Ker}(\rho)]}$, the kernel 

$$
\phi^\calT:=\tilde{\rho}^F_!(\phi^{[\t_3/{\rm Ker}(\rho)]})
$$ 
defines also an involutive Fourier transforms $\calF^\calT$ on $\calC(\calT^F)$.
\bigskip

Notice that even if $\tilde{\rho}$ is not an isomorphism of stacks (as $[\t_3/{\rm Ker}(\rho)]$ is smooth and $\calT$ is singular), both embeddings 

$$
T\simeq T_3/{\rm Ker}(\rho)\hookrightarrow [\t_3/{\rm Ker}(\rho)],\hspace{1cm} T\hookrightarrow\calT
$$
provide a good space of functions with an involutive  Fourier transform on it that extends the BK-Fourier operator  $\calF^T=\calF^T_\rho:\calC(T^F)\rightarrow\calC(T^F)$. However, the first one does not have an analogue for the group $G$ while the second one does.
\bigskip

Following \cite{L}, we can extend the definition of Deligne-Lusztig induction $R_T^G$ and $R_{T_\sigma}^G$ to operators

$$
R_{\calT}^{\calG}:\calC(\calT^F)\rightarrow\calC(\calG^F), \hspace{1cm}R_{\calT_\sigma}^{\calG}:\calC(\calT_\sigma^F)\rightarrow\calC(\calG^F).
$$
Then we can verify that

\begin{equation}
\phi^\calG=\frac{1}{2}\left(R_\calT^\calG(\phi^\calT)+R_{\calT_\sigma}^\calG(\phi^{\calT_\sigma})\right)
\label{extBK}\end{equation}
where $\phi^{\calT_\sigma}$ is obtained by pushing $\psi\circ\Tr$ along the map $\t_3^{\sigma F}\rightarrow \calT^{\sigma F}$.

\section{Partial extension of the BK-Fourier kernel in the case $G=\GL_2$}

In this section $q$ is assumed to be odd as before. We construct a stack $\calG_\rho$ and an extension of BK-kernel in the $G=\GL_2$ case by analogy with the construction in the $\SL_2$ and $\PGL_2$ cases. But unfortunately, the resulting Fourier operator is not involutive and so our construction is only a partiel solution to the problem.

\subsection{The stack $\calG$}\label{G}

Assume that $G=\GL_2$ with $\rho^\vee={\rm Sym}^2$. By analogy with the $\SL_2$ and  $\PGL_2$ cases, the results of \S \ref{GL} suggest to consider the stack
$$
\calG:=\left.\left[\left\{(x,\alpha,b)\in\gl_2\times\gl_1\times\gl_1\,|\, \det(x)=\alpha^2\right\}\right/\GL_1\times\mu_2\right]
$$
where $\GL_1$ acts as $t\cdot[x,\alpha,b]=[tx,t\alpha,t^{-1}b]$ and $\mu_2$ acts on the second coordinate as $\alpha\mapsto -\alpha$. 

Notice that the open substack of $[x,\alpha,b]$ with $\alpha,b\neq 0$ is isomorphic to $\GL_2$ via the projection 
$$
(x,\alpha,b)\mapsto bx.
$$
We thus have a $G\times G$-equivariant embedding

$$
G\hookrightarrow\calG
$$
for the right and left multiplication.
\bigskip

Consider the toric analogue of $\calG$

$$
\calT:=\left.\left[\left\{(x,\alpha,b)\in\t_2\times\gl_1\times\gl_1\,|\, \det(x)=\alpha^2\right\}\right/\GL_1\times\mu_2\right]
$$
Then $\rho:T_3\rightarrow T, (a,b,c)\mapsto (a^2b,bc^2)$ extends to a morphism of stacks

\begin{equation}
\tilde{\rho}:[\t_3/{\rm Ker}(\rho)]\rightarrow\calT, \hspace{1cm}[a,b,c]\mapsto [(a^2,c^2),ac,b].
\label{GLmor}
\end{equation}
Notice that $\tilde{\rho}$ is an isomorphism on the open subset $\{(a,c)\neq(0,0)\}$ but not globally (as for instance $\calT$ is singular). 

Above $\{[0,b,0]\}$ with $b\neq 0$, the morphism becomes

$$
\left([\{b\neq 0\}/\GL_1]\rightarrow[\{b\neq 0\}/\GL_1]={\rm Spec}(\overline{\F}_q)\right)\times \B(\mu_2)
$$
where $s\in\GL_1$ acts by $s^2$ on the source and by $s$ on the target.

Therefore the morphism of stacks (\ref{GLmor})  is not a bijection between isomorphism classes of objects (unlike the cases of $\SL_2$ and $\PGL_2$).

\subsection{The BK-Fourier kernels $\phi^\calT$, $\phi^{\calT_\sigma}$}

Recall that

$$
{\rm Ker}(\rho)=\{(s,s^{-2},\varepsilon s)\,|\, \varepsilon\in\mu_2,\, s\in\GL_1\}.
$$
Let  $q:\t_3\rightarrow[\t_3/{\rm Ker}(\rho)]$ be the quotient map.

Consider on $[\t_3/{\rm Ker}(\rho)]^F$ and on  $[\t_3/{\rm Ker}(\rho)]^{\sigma F}$ the kernels 

$$
\phi^F=q^F_!(\psi\circ\Tr),\hspace{1cm}\phi^{\sigma F}=q^{\sigma F}_!(\psi\circ\Tr)
$$
i.e. 

$$
\phi^F([a,b,c])=\sum_{s\in\F_q^{\times},\varepsilon\in\mu_2}\psi\left(s(a+\varepsilon c)+s^{-2}b\right),\hspace{.5cm}\phi^{\sigma F}([a',b,a'{^q}])=\sum_{s^q=\pm s}\psi\left(s(a'+\varepsilon a'{^q})+bs^{-2}\right)
$$
considered in \S \ref{toricase}.

Analogously to the cases $G=\SL_2,\PGL_2$ (see \S \ref{geounique}), we consider the kernels 

$$
\phi^\calT:=-(\tilde{\rho}{^F})_!(\phi^F),\hspace{.5cm}\text{ and }\hspace{.5cm}\phi^{\calT_\sigma}:=-(\tilde{\rho}{^{\sigma F}})_!(\phi^{\sigma F}).
$$
They are given by the formulas

\begin{align*}
\phi^\calT([(a^2,c^2),ac,b])&=-\sum_{s\in\F_q^\times,\varepsilon\in\mu_2}\psi\left(s(a+\varepsilon c)+s^{-2}b\right)\\
\phi^{\calT_\sigma}([a'{^2},a'{^{2q}},a'{^{q+1}},b])&=-\sum_{s^q=\pm s}\psi\left(s(a'+\varepsilon a'{^q})+bs^{-2}\right)
\end{align*}
By operating the variable change $s(a+\varepsilon c)\leftrightarrow s'$ when $a+\varepsilon c\neq 0$ we get

$$
\phi^\calT([(a^2,c^2),ac,b])=\begin{cases}-\sum_{s,\varepsilon}\psi\left(s+s^{-2}b(a^2+c^2+2\varepsilon ac)\right)&\text{ if }a^2\neq c^2\\
-\kappa(b)-\sum_s\psi(s+4s^{-2}ba^2)&\text{ if }a=\varepsilon c\neq 0\\
-2\kappa(b)&\text{ if }a=c=0\end{cases}
$$
We thus get the values on the elements $[x,\alpha,b]$ as follows

\begin{scriptsize}
\begin{equation}
\label{tableGL}
\begin{array}{|c|c|c|c|}
\hline
&&&\\
&\left[\left(\begin{array}{cc}a^2&0\\0&a^2\end{array}\right),a^2,b\right]&\left[\left(\begin{array}{cc}a^2&0\\0&c^2\end{array}\right),ac,b\right] &  \left[\left(\begin{array}{cc}a'{^2}&0\\0& a'{^{2q}}\end{array}\right), a'{^{q+1}},b\right]\\
&a,b\in\F_q&a,c\in\F_q, a^2\neq c^2&a'{^2}\neq a'{^{2q}}\\
\hline
&&&\\
\phi^\calT&\begin{cases}-\kappa(b)-1-\Phi([x,\alpha,b])&\text{ if } a\neq 0\\-2\kappa(b)&\text{ if }a=0\end{cases}&-\Phi([x,\alpha,b])&\times \\
&&&\\
\hline
&&&\\
\phi^{\calT_\sigma}&\begin{cases}-\kappa'(b)-1-\Phi([x,\alpha,b])&\text{ if } a\neq 0\\-2\kappa'(b)&\text{ if }a=0\end{cases}&\times&-\Phi([x,\alpha,b])\\
&&&\\
\hline
\end{array}
\end{equation}
\end{scriptsize}
where for $[x,\alpha,b]\in\calG^F$ we put

$$
\Phi([x,\alpha,b]):=\sum_{s\in\F_q^\times,\,\varepsilon\in\mu_2}\psi\left(s+s^{-2}b(\Tr(x)+2 \varepsilon \alpha)\right).
$$
\begin{remark}(Non-involutivity) The Fourier operator $\calF^\calT$ defined from $\phi^\calT$  is not involutive. This follows from Lemma \ref{involutive} as a direct calculation shows that

$$
\Delta([(1,0),0,1],[(0,1),0,1])=4\alpha_o(-1) q\neq 0.
$$

\label{non-involutive}\end{remark}

%
%
%
%
%
%
%
%
%
%
%
%

\subsection{The BK-Fourier kernel $\phi^\calG$}\label{phiG}

We define the BK-Fourier kernel $\phi^\calG$ on $\calG^F$ by the formula (\ref{extBK}).
\bigskip

We compute the values of the BK-Fourier kernel $\phi^\calG$ on $\calG^F$: we have  $\phi^\calG([x,\alpha,b])=0$ if $\alpha\in\gl_1^{F'}$ and otherwise it is given by the following table

\begin{scriptsize}
\begin{equation}
\label{table}
\begin{array}{|c|c|c|c|c|c|}
\hline
&&&&\\
&\left[\left(\begin{array}{cc}a^2&0\\0&a^2\end{array}\right),a^2,b\right]&\left[\left(\begin{array}{cc}a^2&0\\0&c^2\end{array}\right),ac,b\right] &  \left[\left(\begin{array}{cc}x^2&0\\0& x^{2q}\end{array}\right), x^{q+1},b\right]&\left[\left(\begin{array}{cc}a^2&1\\0&a^2\end{array}\right),a^2,b\right]\\
&a,b\in\F_q&a,b,c\in\F_q, a^2\neq c^2&x^2\neq x^{2q}, b\in\F_q&a,b\in\F_q\\
\hline
&&&&\\
\phi^{\mathcal{G}}&\begin{cases}-\left(qS(\alpha_o,\psi_b)\delta_{b\neq 0}+q\delta_{b=0}+\Phi([x,\alpha,b])\right)&\text{ if }a\neq 0\\-2(qS(\alpha_o,\psi_b)\delta_{b\neq 0}+q\delta_{b=0}-1)&\text{ if }a=0\end{cases}&-\Phi([x,\alpha,b])&-\Phi([x,\alpha,b])&\begin{cases}-(q\delta_{b=0}+\Phi([x,\alpha,b]))&\text{ if }a\neq 0\\-2(q\delta_{b=0}-1)&\text{ if }a=0\end{cases} \\
&&&&\\
\hline
\end{array}
\end{equation}
\end{scriptsize}
We can see by  a direct calculation that the operator  $\calF^\calG$ on $\calC(\calG^F)$ defined from $\phi^\calG$ is not involutive (see Remark \ref{non-involutive}).

%
%

\section{The characteristic $2$ case}\label{char2}

In this section we assume that $\F_q$ is a finite field of characteristic $2$ (every element of $\F_q$ is then a square in $\F_q$).  In this case, our approach gives a complete answer to the problem in the three cases $\SL_2$, $\PGL_2$ and $\GL_2$. We explain it when $G=\GL_2$. 

\subsection{BK-Fourier operator on $G$}

First of all, notice that since the characteristic is $2$ we have

$$
{\rm Ker}(\rho^F)={\rm Ker}(\rho^{\sigma F})=\{(s,s^{-2},s)\,|\, s\in\F_q^\times\}.
$$
Following the calculation in \S \ref{GL} we find

\begin{align*}
\phi^T([(a^2,c^2),ac,b])&=\begin{cases}-\sum_{s\in\F_q^\times}\psi\left(s+s^{-2}(a^2+c^2)b\right)& \text{ if }a^2\neq c^2 \text{ i.e.}\, a\neq c,\\
-\kappa(b) &\text{ if }a^2=c^2.\end{cases}
\end{align*}
and

\begin{align*}
\phi^{T_\sigma}([(a'{^2},a'{^{2q}}),a'{^{q+1}},b])&=\begin{cases}-\sum_{s\in\F_q^\times}\psi\left(s+s^{-2}(a'{^2}+a'{^{2q}})b\right)& \text{ if }a'{^2}\neq a'{^{2q}} \text{ i.e.}\, a'{^q}\neq a',\\
-\kappa(b) &\text{ otherwise}.\end{cases}
\end{align*}

Since the characteristic is $2$, the identification (\ref{idGL}) becomes

 $$
 G\simeq \{(x,\alpha,b)\in\GL_2\times\GL_1\times\GL_1\,|\,\det(x)=\alpha^2\}/\GL_1
 $$

Writing the elements of $G$ as $[x,\alpha,b]$ we get the values of the BK-Fourier kernel as follows

\begin{scriptsize}
\begin{equation}
\label{BK2}
\begin{array}{|c|c|c|c|c|c|}
\hline
&&&&\\
&\left[\left(\begin{array}{cc}a^2&0\\0&a^2\end{array}\right),a^2,b\right]&\left[\left(\begin{array}{cc}a^2&0\\0&c^2\end{array}\right),ac,b\right] &  \left[\left(\begin{array}{cc}x^2&0\\0& x^{2q}\end{array}\right), x^{q+1},b\right]&\left[\left(\begin{array}{cc}a^2&1\\0&a^2\end{array}\right),a^2,b\right]\\
&a,b\in\F_q&a,b,c\in\F_q, a^2\neq c^2&x^2\neq x^{2q}, b\in\F_q&a,b\in\F_q\\
\hline
&&&&\\
\phi^G&-\kappa(b)&-\sum_{s\in\F_q^\times}\psi\left(s+s^{-2}b\Tr(x)\right)&-\sum_{s\in\F_q^\times}\psi\left(s+s^{-2}b\Tr(x)\right)&-\kappa(b)\\
&&&&\\
\hline
\end{array}
\end{equation}
\end{scriptsize}

\subsection{Extending the BK-Fourier operator}

Since the characteristic is $2$, the morphism 

$$
\rho:T_3\rightarrow T,\hspace{.5cm}(a,b,c)\mapsto (a^2b,bc^2)
$$
extends to a group morphism

$$
\rho:\GL_2\times\GL_1\rightarrow\GL_2,\hspace{.5cm}(x',b)\mapsto \iota_2(x')b
$$
where $\iota_2:\gl_2\rightarrow\gl_2$ is the algebra isomorphism defined by
$$
\iota_2\left(\begin{array}{cc}a&b\\c&d\end{array}\right):=\left(\begin{array}{cc}a^2&b^2\\c^2&d^2\end{array}\right).
$$

\begin{remark}Notice that 

$$
\phi^G=-\rho_!(\psi\circ\Tr)
$$
where $\Tr(x',b)=\Tr(x')+b$ for $(x',b)\in\gl_2\times\gl_1$.

\end{remark}

The  kernel of $\rho$ is

$$
{\rm Ker}(\rho)=\left.\left\{\left(\left(\begin{array}{cc}s&0\\0&s\end{array}\right),s^{-2}\right)\,\right| s\in\GL_1\right\}.
$$
We consider the $G\times G$-equivariant open embedding

\begin{equation}
G\simeq (\GL_2\times\GL_1)/{\rm Ker}(\rho)\hookrightarrow \calG:=\left[(\gl_2\times\gl_1)/{\rm Ker}(\rho)\right].
\label{emb}\end{equation}
We then consider on $\calG^F$ the kernel

$$
\phi^\calG([x',b])=-\sum_{s\in\F_q^\times}\psi\left(s\Tr(x')+s^{-2}b\right)
$$
Consider the two-variable kernel $K^\calG:\calG^F\times\calG^F\rightarrow\Q$ defined by

$$
K^\calG([x'_1,b_1],[x'_2,b_2])=\phi^\calG([x'_1x'_2,b_1b_2]).
$$

\begin{theorem} (1) The Fourier operator $\calF^\calG:\calC(\calG^F)\rightarrow\calC(\calG^F)$ defined from $K^\calG$ is involutive, i.e.

$$
\calF^\calG\circ\calF^\calG(f)=q^5\, f,
$$
for all $f\in\calC(\calG^F)$.

\noindent (2) $\calF^\calG$ extends the BK-Fourier operator $\calF^G$.

\end{theorem}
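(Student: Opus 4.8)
I would prove this by realizing $\calF^\calG$ as (a sign times) the ordinary additive Fourier transform on the $5$-dimensional $\F_q$-vector space $V:=\gl_2\times\gl_1$, in exact parallel with the tori case of \S\ref{toricase}, but with $\t_n$ replaced by $V$. The point is that $V$ carries an associative commutative $\F_q$-algebra structure (matrix multiplication on $\gl_2$, multiplication on $\gl_1$), and that the linear form $\Tr(x,b):=\Tr(x)+b$ has associated symmetric bilinear form $\langle u_1,u_2\rangle:=\Tr(u_1u_2)=\Tr(x_1x_2)+b_1b_2$, which is \emph{non-degenerate even in characteristic $2$}: this is clear for the $\gl_1$-factor, and for $\gl_2$ the trace form has Gram matrix of determinant $-1$ in the basis $(E_{11},E_{12},E_{21},E_{22})$. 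Consequently the additive Fourier transform $\calF^V(g)(u')=\sum_{u\in V^F}\psi(\langle u',u\rangle)g(u)$ satisfies $\calF^V\circ\calF^V(g)=q^{5}\,g^-$, and in characteristic $2$ one has $g^-=g$.

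Next I would set up the identifications. Since $\mathrm{Ker}(\rho)=\{\tau(s):=(sI_2,s^{-2})\mid s\in\GL_1\}$ is connected, $\calG^F=[V^F/\GL_1^F]$ and $\calC(\calG^F)$ is the space $\calC(V^F)^{\GL_1^F}$ of functions invariant under $u\mapsto\tau(s)u$, with $|\mathrm{Aut}([u])|=|\mathrm{Stab}_{\GL_1^F}(u)|$. The elements $\tau(s)$ lie in the center of the algebra $V$, hence are \emph{self-adjoint} for $\langle\,,\rangle$; also $\phi^\calG([w])=-\sum_{s\in\F_q^\times}\psi(\Tr(\tau(s)w))$, so $K^\calG([u'],[u])=\phi^\calG([u'u])=-\sum_{s}\psi(\langle u',\tau(s)u\rangle)$. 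Turning the sum over isomorphism classes into $\tfrac1{q-1}$ times a sum over $V^F$ and then substituting $u\mapsto\tau(s)^{-1}u$ in the inner sum (legitimate because $\tau(s)$ acts invertibly on $V$, because $\langle u',\tau(s)\tau(s)^{-1}u\rangle=\langle u',u\rangle$, and because the summand is $\GL_1^F$-invariant) collapses the sum over $s$ and shows: under $\calC(\calG^F)=\calC(V^F)^{\GL_1^F}$ the operator $\calF^\calG$ is $-\calF^V$ restricted to $\GL_1^F$-invariants; this restriction makes sense because self-adjointness of the $\tau(s)$ forces $\calF^V$ to preserve $\GL_1^F$-invariance.

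Assertion (1) then follows immediately: $\calF^\calG\circ\calF^\calG$ corresponds to $(-\calF^V)\circ(-\calF^V)=\calF^V\circ\calF^V$, which in characteristic $2$ is multiplication by $q^{5}$. For assertion (2): $i_!f$ has invariant representative supported on $(\GL_2\times\GL_1)^F$ and equal there to $f\circ\rho$; applying the displayed description of $\calF^\calG$ and restricting to the open locus gives $(i^*\calF^\calG i_!f)(\rho(u'))=-\sum_{u\in(\GL_2\times\GL_1)^F}\psi(\langle u',u\rangle)f(\rho(u))$. On the other hand $\rho^F:(\GL_2\times\GL_1)^F\to G^F$ is $(q-1)$-to-one, so $\calF^G(f)(\rho(u'))=\sum_{h\in G^F}\phi^G(\rho(u')h)f(h)=\tfrac1{q-1}\sum_{u}\phi^G(\rho(u'u))f(\rho(u))$; plugging in $\phi^G=-\rho_!(\psi\circ\Tr)$ and running the same substitution $u\mapsto\tau(s)^{-1}u$ collapses the resulting sum over $s$ to exactly the same expression. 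Hence $\calF^G=i^*\circ\calF^\calG\circ i_!$.

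The characteristic-$2$ case is genuinely easy and there is no real obstacle; the only two points that need attention are that the trace form on $\gl_2$ stays non-degenerate in characteristic $2$ (it does, by the determinant computation above), and that the naive two-variable kernel $\psi(\langle u_1,u_2\rangle)$ on $V$ is \emph{not} invariant under the diagonal $\GL_1^F$-action — it gets rescaled under $u_i\mapsto\tau(s)u_i$ — so one cannot invoke Proposition~\ref{descent} or Lemma~\ref{connected} verbatim and must instead use the centrality and self-adjointness of the scaling elements $\tau(s)$ to commute $\calF^V$ past the averaging over $\GL_1^F$. Keeping track of the single overall sign and of the factor $q^5$ is the only remaining bookkeeping.
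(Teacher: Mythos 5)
Your proof is correct and follows essentially the same route as the paper: both descend the involutive additive Fourier transform on the $5$-dimensional space $\gl_2\times\gl_1$ (with its non-degenerate trace pairing) through the connected kernel of $\rho$ to get (1), and both obtain (2) by identifying the restriction of $\phi^\calG$ to $G^F$ with $\phi^G=-\rho_!(\psi\circ\Tr)$ using the $(q-1)$-to-one surjection $\rho^F$. Your observation that the kernel $\psi(\Tr(u_1u_2))$ is \emph{not} diagonally $\GL_1^F$-invariant — so that Lemma \ref{connected} applies only after replacing its stated invariance hypothesis by the centrality/self-adjointness of the elements $(sI_2,s^{-2})$, which is all its proof actually uses — is a fair and worthwhile sharpening of the paper's one-line appeal to that lemma.
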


\begin{proof} The first assertion follows from Lemma \ref{connected} and the fact that the Fourier transform on $\gl_2\times\gl_1$ defined from the kernel 

$$
(x',b)\mapsto \psi(\Tr(x')+b)
$$
is involutive.
For the second assertion we need to see that the restriction of $\phi^\calG$ to $G$ along the embedding (\ref{emb}) is the kernel $\phi^G$ whose values are given by the Table (\ref{BK2}). To see that we use (when $\Tr(x')\neq 0$) the variable change $s\Tr(x')\leftrightarrow s$ which allows us to pass from 

$$
\sum_s\psi(s\Tr(x')+s^{-2}b)\hspace{1cm}\text{to}\hspace{1cm}\sum_s\psi(s+s^{-2}b\Tr(x))
$$
with $x=\iota_2(x')$.

\end{proof}

\end{document}